\newtheorem{thm}{Theorem}[section]
\newtheorem{cor}[thm]{Corollary}
\newtheorem{lem}[thm]{Lemma}
\newtheorem{prop}[thm]{Proposition}
\theoremstyle{definition}
\newtheorem{ex}[thm]{Example}
\theoremstyle{definition}
\newtheorem{defn}[thm]{Definition}
\theoremstyle{definition}
\newtheorem{rem}[thm]{Remark}
\theoremstyle{definition}
\def\Q{\mathbb Q}
\def\C{\mathbb C}
\def\R{\mathbb R}
\def\Z{\mathbb Z}
\def\A{\mathscr A}
\def\dim{\operatorname{dim}}
\def\supp {\mathrm{supp}}
\def\x {\mathbf x}
\def\O{\mathcal O}
\def\B{\mathscr B}
\def\m{\mathit m}
\def\w{\overline w}
\def\LL{\mathscr L}
\def\k{\mathit k}
\def\geq{\geqslant}
\def\leq{\leqslant}
\def\*{\blacksquare}
\subjclass[$2010$ Mathematics Subject Classification]{Primary
32S05; Secondary 13H15}
\begin{document}

\title[The {\L}ojasiewicz exponent of a set of weighted homogeneous
ideals]{The {\L}ojasiewicz exponent of a set of weighted\\\vskip4pt
homogeneous ideals}
\author{C. Bivi\`a-Ausina}
\address{
Institut Universitari de Matem\`atica Pura i Aplicada,
Universitat Polit\`ecnica de Val\`encia,
Cam\'i de Vera, s/n,
46022 Val\`encia,
Spain}
\email{carbivia@mat.upv.es}

\author{S. Encinas}
\address{Departamento de Matem\'atica Aplicada,
Universidad de Valladolid,
Avda. Salamanca s/n,
47014 Valladolid,
Spain}
\email{sencinas@maf.uva.es}

\keywords{{\L}ojasiewicz exponents, integral closure of ideals, mixed
multiplicities of ideals, monomial ideals}

\thanks{The first author was partially supported by DGICYT
Grant MTM2006--06027. The second author was partially supported by
DGICYT Grant MTM2006--10548.}

\begin{abstract}
We give an expression for the {\L}ojasiewicz exponent of a set of
ideals
which are pieces of a weighted homogeneous filtration. We also study
the application of this formula
to the computation of the {\L}ojasiewicz exponent of the gradient of a
semi-weighted homogeneous function $(\C^n,0)\to (\C,0)$ with an isolated
singularity at the origin.
\end{abstract}

\maketitle

\section{Introduction}

Let $R$ be a Noetherian ring and let $I$ be an ideal of $R$.  Let
$\nu_I$ be the order function of $R$ with respect to $I$, that is,
$\nu_I(h)=\sup\{r: h\in I^r\}$, for all $h\in R$, $h\neq 0$ and
$\nu(0)=\infty$.  Let us consider the function $\overline{\nu}_I:R\to
\R_{ \geq 0}\cup \{\infty\}$ defined by
$\overline{\nu}_I(h)=\lim_{s\to \infty}\frac{\nu_I(h^s)}{s}$, for all
$h\in R$.  It was proven by Samuel \cite{Samuel1952} and Rees
\cite{Rees1956}
that this limit exists and Nagata proved in \cite{Nagata1957} that, when
finite, the number $\nu_I(h)$ is a rational number.  The function
$\overline\nu$ is called the {\it asymptotic Samuel function of $I$}.
If $J$ is another ideal of $R$, then the number $\overline \nu_I(J)$
is defined analogously and if $h_1,\dots, h_r$ is a generating system
of $J$ then $\overline \nu_I(J)=\min\{\overline \nu_I(h_1),\dots,
\overline \nu_I(h_r)\}$.  Let us denote by $\overline I$ the integral
closure of $I$.  As a consequence of the theorem of existence of the
Rees valuations of an ideal (see for instance \cite[p.\ 192]{HunekeSwanson2006}), it
is known that, if $J$ is another ideal and $p,q\in\Z_{\geq 1}$, then
$J^q\subseteq \overline{I^p}$ if and only if $\overline \nu_I(J)\geq
\frac{p}{q}$.

Let $\O_n$ denote the ring of analytic function germs $f:(\C^n,0)\to
\C$ and let $m_n$ denote its maximal ideal, that will be also denoted
by $m$ if no confusion arises.  Let $I$ be an ideal of $\O_n$ of
finite colength.  Then Lejeune and Teissier proved in
\cite[p.\ 832]{LejeuneTeissier1974} that $\frac{1}{\overline \nu_I(m)}$
is equal to the {\L}ojasiewicz exponent of $I$ (in fact, this result was proven
in a more general context, that is, for ideals in a structural ring $\O_X$,
where $X$ is a reduced complex analytic space).  If $g_1,\dots, g_r$
is a generating system of $I$, then the {\it {\L}ojasiewicz exponent of
$I$} is defined as the infimum of those $\alpha>0$ such that there
exists a constant $C>0$ and an open neighbourhood $U$ of $0\in\C^n$
such that
$$
\Vert x\Vert^\alpha\leq C \sup_i \vert g_i(x)\vert,
$$
for all $x\in U$.  Let us denote this number by $\LL_0(I)$ and let
$e(I)$ denote the Samuel multiplicity of $I$.  Therefore we have that
$\LL_0(I)=\inf\{\frac pq\in\Q_+: m^p\subseteq \overline{I^q}\}$ and
hence, by the Rees' multiplicity theorem (see \cite[p.\ 222]{HunekeSwanson2006}) we
have that $\LL_0(I)=\inf\{\frac pq\in\Q_+: e(I^q)=e(I^q+m^p)\}$.  This
expression of $\LL_0(I)$ is one of the motivations that lead the first
author to introduce the notion of {\L}ojasiewicz exponent of a set of
ideals in \cite{Bivia2009}.  This notion is based on the Rees' mixed
multiplicity of a set of ideals (Definition \ref{lasigma}).

{\L}ojasiewicz exponents have important applications in singularity
theory.  Here we recall one of them.  Let $f:(\C^n,0)\to (\C,0)$ be
the germ of a complex analytic function with an isolated singularity
at the origin and let $J(f)=\langle\frac{\partial f}{\partial
x_1},\dots, \frac{\partial f}{\partial x_n}\rangle$ be the Jacobian
ideal of $f$.  Let us denote the number $\LL_0(J(f))$ by $\LL_0(f)$.
The {\it degree of $C^0$-determinacy of $f$}, denoted by $s_0(f)$, is
defined as the smallest integer $r$ such that $f$ is topologically
equivalent to $f+g$, for all $g$ such that $\nu_{m_n}(g)\geq r+1$.
Teissier proved in \cite[p.\ 280]{Teissier1977} that
$s_0(f)=[\LL_0(f)]+1$, where $[a]$ stands for the integer part of a
given $a\in\R$.  Despite the fact that this equality connects
$\LL_0(f)$ with a fundamental topological aspect of $f$, the problem
of determining whether the {\L}ojasiewicz exponent $\LL_0(f)$ is a
topological invariant of $f$ is still an open problem.

The effective computation of $\LL_0(I)$ has proven to be a challenging
problem in algebraic geometry that, by virtue of the results of
Lejeune and Teissier is directly related with the computation of the
integral closure of an ideal.  In \cite{BiviaEncinas2009} the authors relate the
problem of computing $\LL_0(I)$ with the algorithms of resolution of
singularities.  The approach that we give in this paper is based on
techniques of commutative algebra.

We recall that, if $w=(w_1,\dots, w_n)\in \Z_{\geq 1}^n$, then a
polynomial $f\in \C[x_1,\dots, x_n]$ is called weighted homogeneous of
degree $d$ with respect to $w$ when $f$ is written as a sum of
monomials $x_1^{k_1}\cdots x_n^{k_n}$ such that
$w_1x_1+\cdots+w_nx_n=d$.  This paper is motivated by the main result
of Krasi\'nski, Oleksik and P{\l}oski in \cite{KrasinskiOleksikPloski2009},
which says that if
$f:\C^3\to \C$ is a weighted homogeneous polynomial of degree $d$ with
respect to $(w_1,w_2,w_3)$ with an isolated singularity at the origin,
then $\LL_0(f)$ is given by the expression
$$
\LL_0(f)=\frac{d-\min\{w_1,w_2,w_3\}}{\min\{w_1,w_2,w_3\}},
$$
provided that $d\geq 2w_i$, for all $i=1,2,3$.  That is, $\LL_0(f)$
depends only on the weights $w_i$ and the degree $d$ in this case.
Therefore it is concluded that $\LL_0(f)$ is a topological invariant
of $f$, by virtue of the results of Saeki \cite{Saeki1988} and
Yau \cite{Yau1988}.
In view of the above equality it is reasonable to
conjecture that the analogous result holds in general, that is, if
$f:(\C^n,0)\to (\C,0)$ is a weighted homogeneous polynomial, or even a
semi-weighted homogeneous function (see Definition \ref{semiqh}), with
respect to $(w_1,\dots, w_n)$ of degree $d$ with an isolated
singularity at the origin, and if $d\geq 2w_i$, for all $i=1,\dots,
n$, then
\begin{equation}\label{conjectura}
\LL_0(f)=\frac{d-\min\{w_1,\dots,w_n\}}{\min\{w_1,\dots,w_n\}}.
\end{equation}
We point out that inequality $(\leq)$ always holds in
(\ref{conjectura}) for semi-weighted homogeneous functions (see
Corollary \ref{CorMain}).

In this paper we obtain the equality (\ref{conjectura}) for a
semi-weighted homogeneous germs $f:(\C^n,0)\to (\C,0)$ under a
restriction expressed in terms of the supports of the component
functions of $f$ (see Corollary \ref{CorMain}).  This result arises as
a consequence of a more general result involving the {\L}ojasiewicz
exponent of a set of ideals coming from a weighted homogeneous
filtration (see Theorem \ref{main}).  Our approach to {\L}ojasiewicz
exponents is purely algebraic and comes from the techniques developed
in \cite{Bivia2008} and \cite{Bivia2009}.  This new point of view of the
subject has led us to detect a broad class of semi-weighted
homogeneous functions where relation (\ref{conjectura}) holds.

\section{The Rees' mixed multiplicity of a set of ideals}

Let $(R,m)$ be a Noetherian local ring and let $I$ be an ideal of
$R$. We denote by $e(I)$ the
Samuel multiplicity of $I$. Let $\dim R=n$ and let us fix a set of
$n$ ideals
$I_1,\dots, I_n$ of $R$ of finite colength. Then we denote
by $e(I_1,\dots, I_n)$ the mixed multiplicity of $I_1,\dots, I_n$, as
defined by Teissier and Risler in \cite{Teissier1973} (we refer to
\cite[\S 17]{HunekeSwanson2006} and \cite{Swanson2007} for fundamental results about mixed
multiplicities of ideals). We recall that, if the ideals $I_1,\dots,
I_n$ are equal to a
given ideal, say $I$, then $e(I_1,\dots, I_n)=e(I)$.

Let us suppose that the residue field $k=R/m$ is infinite. Let
$a_{i1},\dots, a_{is_i}$ be
a generating system of $I_i$, where $s_i\geqslant 1$, for
$i=1,\dots, n$. Let $s=s_1+\cdots +s_n$. We say that a property
holds for {\it sufficiently general} elements of $I_1\oplus \cdots
\oplus I_n$ if there exists a non-empty Zariski-open set $U$ in
$k^s$ such that the said property holds for all elements
$(g_1,\dots, g_n)\in I_1\oplus \cdots \oplus I_n$ such that
$g_i=\sum_{j}u_{ij}a_{ij}$, $i=1,\dots, n$ and the image of $(u_{11}, \dots,
u_{1s_1},\dots, u_{n1}, \dots, u_{ns_n})$ in $k^{s}$ lies in $U$.

By virtue of a result of Rees (see \cite{Rees1984} or
\cite[p.\ 335]{HunekeSwanson2006}), if the ideals $I_1,\dots, I_n$
have finite colength, then
the mixed multiplicity of $I_1,\dots, I_n$ is obtained as
$e(I_1,\dots, I_n)=e(g_1,\dots, g_n)$, for a
sufficiently general element $(g_1,\dots, g_n)\in
I_1\oplus\cdots\oplus I_n$.

Let us denote by $\O_n$ the ring of analytic function germs
$(\C^n,0)\to \C$.  Let $g:(\C^n,0)\to (\C^n,0)$ be a
complex analytic map germ such that $g^{-1}(0)=\{0\}$ and let $g_1,\dots,
g_n$ denote the component functions of $g$. We recall that $e(I)=\dim_\C
\O_n/I$, where $I$ is the ideal of $\O_n$ generated by $g_1,\dots,
g_n$. It turns that this number is equal to the geometric multiplicity
of $g$ (see \cite[p.\ 258]{Lojasiewicz1991} or \cite{Palamodov1967}).

Now we show the definition of a number associated to a family of
ideals that
generalizes the notion of mixed multiplicity. This number is
fundamental in the results of this paper.

We denote by $\Z_+$ the set of non-negative integers. Let $a\in \Z$,
we denote by $\Z_{\geq a}$ the set of integers $z\geq a$.

\begin{defn}\cite{Bivia2008}\label{lasigma}
Let $(R,m)$ be a Noetherian local ring of dimension $n$. Let
$I_1,\dots,
I_n$ be ideals of $R$. Then we define the {\it Rees' mixed
multiplicity of $I_1,\dots, I_n$ as}
\begin{equation}\label{sigma}
\sigma(I_1,\dots, I_n)=\max_{r\in\Z_+}\,e(I_1+\m^r,\dots,
I_n+\m^r),
\end{equation}
when the number on the right hand side is finite. If the set of
integers
$\{e(I_1+\m^r,\dots, I_n+\m^r): r\in\Z_+\}$ is non-bounded then we
set $\sigma(I_1,\dots, I_n)=\infty$.
\end{defn}

We remark that if $I_i$ is an ideal of finite colength, for all
$i=1,\dots, n$, then
$\sigma(I_1,\dots, I_n)=e(I_1,\dots, I_n)$. The next proposition
characterizes the finiteness of
$\sigma(I_1,\dots, I_n)$.

\begin{prop}\cite[p.\,393]{Bivia2008}\label{sigmaexists}
Let $I_1,\dots, I_n$ be ideals of a Noetherian
local ring $(R,m)$ such that the residue field $\k=R/m$ is
infinite. Then $\sigma(I_1,\dots, I_n)<\infty$ if and only if
there exist elements $g_i\in I_i$, for $i=1,\dots, n$, such that
$\langle g_1,\dots, g_n\rangle$ has finite colength. In this case,
we have that $\sigma(I_1,\dots, I_n)=e(g_1,\dots, g_n)$ for
sufficiently general elements $(g_1,\dots, g_n)\in I_1\oplus
\cdots \oplus I_n$.
\end{prop}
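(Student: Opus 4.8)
The plan is to prove Proposition~\ref{sigmaexists} by reducing everything to the case of ideals of finite colength, where the result of Rees on mixed multiplicities of sufficiently general elements (quoted just before Definition~\ref{lasigma}) applies directly. First I would establish the existence of elements $g_i\in I_i$ with $\langle g_1,\dots,g_n\rangle$ of finite colength as the natural finiteness criterion: if such elements exist, then for every $r\in\Z_+$ we have $\langle g_1,\dots,g_n\rangle\subseteq (I_1+\m^r)\cap\cdots$, and more to the point $g_i\in I_i+\m^r$, so $\langle g_1,\dots,g_n\rangle\subseteq\sum_i(I_i+\m^r)$ witnesses that each $I_i+\m^r$ contains a system of parameters; hence $e(I_1+\m^r,\dots,I_n+\m^r)$ is finite for each $r$. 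Conversely, if no such elements exist, I would argue that the variety $V(I_1)\cap\cdots\cap V(I_n)$ has positive dimension (or rather that one cannot pick one generator-combination from each $I_i$ cutting down to dimension zero), which forces $e(I_1+\m^r,\dots,I_n+\m^r)=\infty$ for all $r$ in the sense that the mixed multiplicity of ideals not all of finite colength is infinite, so $\sigma=\infty$.

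Next, assuming $\sigma(I_1,\dots,I_n)<\infty$, I would prove the formula $\sigma(I_1,\dots,I_n)=e(g_1,\dots,g_n)$ for sufficiently general $(g_1,\dots,g_n)\in I_1\oplus\cdots\oplus I_n$. The key observation is monotonicity: for $r\leq r'$ one has $I_i+\m^{r'}\subseteq I_i+\m^{r}$, and since mixed multiplicity is monotone under this kind of inclusion (larger ideals give smaller or equal mixed multiplicity, by the inclusion of the associated Rees-type algebras or via the sufficiently-general-element characterization), the sequence $e(I_1+\m^r,\dots,I_n+\m^r)$ is nondecreasing in $r$. Being bounded (that is what $\sigma<\infty$ means), it stabilizes: there is $r_0$ with $e(I_1+\m^r,\dots,I_n+\m^r)=\sigma(I_1,\dots,I_n)$ for all $r\geq r_0$. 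For such large $r$ the ideals $I_i+\m^r$ all have finite colength, so Rees' theorem gives $e(I_1+\m^r,\dots,I_n+\m^r)=e(h_1,\dots,h_n)$ for sufficiently general $(h_1,\dots,h_n)\in(I_1+\m^r)\oplus\cdots\oplus(I_n+\m^r)$.

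The remaining point—and this is the part I expect to be the main obstacle—is to transfer ``sufficiently general in $(I_1+\m^r)\oplus\cdots$'' to ``sufficiently general in $I_1\oplus\cdots\oplus I_n$'' itself. Here I would argue as follows: a sufficiently general element of $I_i$ generates, together with general elements of the other ideals, an ideal of finite colength (by the finiteness hypothesis and a standard Bertini-type / prime-avoidance argument for general linear combinations of generators); moreover $e(g_1,\dots,g_n)$ for such a general tuple is \emph{independent of $r$ once $r$ is large}, because adding high-order terms $\m^r$-combinations to the $g_i$ does not change the colength once the $g_i$ already cut out $\{0\}$ with the stabilized multiplicity—concretely, if $\langle g_1,\dots,g_n\rangle$ has colength $\sigma$ then $\m^r\subseteq\langle g_1,\dots,g_n\rangle$ for $r$ large, so $I_i+\m^r$ and $\langle g_1,\dots,g_n\rangle+I_i$ interact trivially. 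Thus a sufficiently general element of $I_1\oplus\cdots\oplus I_n$ is also sufficiently general in $(I_1+\m^r)\oplus\cdots\oplus(I_n+\m^r)$ for the purpose of computing the (stabilized) mixed multiplicity, which yields $e(g_1,\dots,g_n)=\sigma(I_1,\dots,I_n)$. The delicate bookkeeping is ensuring that the single Zariski-open condition defining ``sufficiently general in $I_1\oplus\cdots\oplus I_n$'' simultaneously guarantees finite colength and the correct multiplicity; I would handle this by intersecting finitely many open conditions (one for finiteness of colength, one from Rees' theorem at the stabilization level $r_0$) and invoking that a finite intersection of nonempty Zariski-open sets in an irreducible affine space is nonempty.
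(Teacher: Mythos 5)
First, a point of reference: the paper offers no proof of this proposition --- it is quoted from \cite[p.~393]{Bivia2008} --- so there is no internal argument to compare yours against; I can only judge the proposal on its own. Doing so, there is a genuine gap already in the equivalence, in both directions. By Definition \ref{lasigma}, $\sigma(I_1,\dots,I_n)<\infty$ means that the set of integers $e(I_1+m^r,\dots,I_n+m^r)$, $r\in\Z_+$, is \emph{bounded}; each individual term is automatically finite because $I_i+m^r\supseteq m^r$ is $m$-primary for every $r\geq 1$. Your forward implication only argues that each term is finite, which is vacuous and gives no $r$-independent bound on the nondecreasing sequence. What is needed is, for instance, $e(I_1+m^r,\dots,I_n+m^r)\leq e(\langle g_1\rangle+m^r,\dots,\langle g_n\rangle+m^r)$ by monotonicity, followed by the observation that once $m^r\subseteq m\,\langle g_1,\dots,g_n\rangle$ a sufficiently general tuple $(h_1,\dots,h_n)$ with $h_i\in\langle g_i\rangle+m^r$ generates $\langle g_1,\dots,g_n\rangle$ (Nakayama), so the bound stabilizes at $e(\langle g_1,\dots,g_n\rangle)<\infty$. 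Symmetrically, your converse rests on the claim that ``the mixed multiplicity of ideals not all of finite colength is infinite'', but no ideal of infinite colength ever enters the definition of $\sigma$: all the $I_i+m^r$ are $m$-primary, and what must be shown is that boundedness of the sequence forces the existence of a finite-colength tuple $(g_1,\dots,g_n)$ with $g_i\in I_i$.

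That last point is exactly the ``main obstacle'' you flag in your final paragraph, and your resolution of it is circular: you justify that perturbing the $g_i$ by elements of $m^r$ does not change the multiplicity ``once the $g_i$ already cut out $\{0\}$ with the stabilized multiplicity'' --- which is the conclusion being proved. To make this step honest you would need (i) that the tuples $(g_1,\dots,g_n)\in I_1\oplus\cdots\oplus I_n$ generating an ideal of finite colength correspond to a nonempty Zariski-open set of coefficient vectors in $k^s$ (this does not follow from the existence of a single such tuple without an argument); (ii) a uniform choice of $r$, since the exponent for which $m^r\subseteq m\,\langle g_1,\dots,g_n\rangle$ depends on the tuple; and (iii) that a general point of $k^{s}$ extends to a point of $k^{s+t}$ lying in the open set furnished by Rees' theorem for $(I_1+m^r)\oplus\cdots\oplus(I_n+m^r)$ (e.g.\ via openness of the coordinate projection). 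None of these is supplied, and items (i)--(iii) are essentially the whole content of the cited result, whose proof in \cite{Bivia2008} proceeds through joint reductions rather than the perturbation scheme you sketch.
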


The following result will be useful in subsequent sections.

\begin{lem}\label{reverseincl}\cite[p. 392]{Bivia2009}
Let $(R,m)$ be a Noetherian local ring of dimension $n\geq 1$. Let
$J_1,\dots, J_n$ be ideals of $R$ such that
$\sigma(J_1,\dots, J_n)<\infty$. Let $I_1,\dots, I_n$ be ideals of
$R$ such that $J_i\subseteq I_i$, for all $i=1,\dots, n$. Then
$\sigma(I_1,\dots, I_n)<\infty$ and
$$
\sigma(J_1,\dots, J_n)\geqslant \sigma(I_1,\dots, I_n).
$$
\end{lem}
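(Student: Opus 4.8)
The plan is to reduce everything to the characterization of finiteness and the generic-element formula provided by Proposition \ref{sigmaexists}. First I would pass to the case where the residue field $k=R/m$ is infinite; if it is finite, one replaces $R$ by $R[t]_{mR[t]}$, which does not change dimension, mixed multiplicities, or the finiteness of $\sigma$, and has infinite residue field. So assume $k$ is infinite. Since $\sigma(J_1,\dots,J_n)<\infty$, Proposition \ref{sigmaexists} gives elements $g_i\in J_i$ with $\langle g_1,\dots,g_n\rangle$ of finite colength. Because $J_i\subseteq I_i$ for every $i$, the very same tuple $(g_1,\dots,g_n)$ lies in $I_1\oplus\cdots\oplus I_n$ and still generates an ideal of finite colength; hence, by the ``if'' direction of Proposition \ref{sigmaexists}, $\sigma(I_1,\dots,I_n)<\infty$ as well.

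For the inequality, I would use the generic-element description of both numbers. Fix generating systems of the $J_i$ and extend them to generating systems of the $I_i$ (since $J_i\subseteq I_i$). By Proposition \ref{sigmaexists}, there is a nonempty Zariski-open set $U\subseteq k^{s}$ (where $s$ is the total number of generators chosen for the $I_i$) such that $\sigma(I_1,\dots,I_n)=e(g_1,\dots,g_n)$ for every tuple corresponding to a point of $U$; likewise there is a nonempty open set $U'$ in the smaller coordinate space attached to the generators of the $J_i$ such that $\sigma(J_1,\dots,J_n)=e(h_1,\dots,h_n)$ for tuples from $U'$. Viewing the $J_i$-coordinate space as a coordinate subspace of $k^{s}$ (setting the coefficients of the ``extra'' generators to zero), the set $U\cap(U'\times\{0\})$ is a nonempty open subset of that subspace, because $U$ is open and dense and the subspace is irreducible and meets $U$ — one checks $U$ meets it using that $\sigma(I_1,\dots,I_n)$ is finite, so some admissible tuple with zero extra coefficients generates a finite-colength ideal. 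Choosing $(g_1,\dots,g_n)$ from this intersection, we simultaneously have $e(g_1,\dots,g_n)=\sigma(I_1,\dots,I_n)$ and $e(g_1,\dots,g_n)\geq\sigma(J_1,\dots,J_n)$, the latter because a general $J$-tuple gives the value $\sigma(J_1,\dots,J_n)$ while this particular $J$-tuple, like any finite-colength tuple in $J_1\oplus\cdots\oplus J_n$, has $e(g_1,\dots,g_n)\geq e$ of the generic one — indeed the generic mixed multiplicity is the \emph{minimum} of $e(g_1,\dots,g_n)$ over admissible tuples. Combining, $\sigma(J_1,\dots,J_n)\geq\sigma(I_1,\dots,I_n)$.

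The step I expect to be the main obstacle is the last comparison: justifying that a tuple which is generic for the family $I_1,\dots,I_n$ can be chosen so that it is \emph{simultaneously} an admissible (finite-colength) tuple for the family $J_1,\dots,J_n$, and that for such a tuple $e(g_1,\dots,g_n)$ is at least the generic value $\sigma(J_1,\dots,J_n)$. This requires the semicontinuity/minimality property of multiplicities of parameter ideals under deformation of the generators — namely that among all tuples $(g_1,\dots,g_n)\in J_1\oplus\cdots\oplus J_n$ of finite colength, the value $e(g_1,\dots,g_n)$ attains its minimum precisely on a Zariski-open set, and that minimum is $\sigma(J_1,\dots,J_n)$. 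Once this is in hand (it follows from the Rees result quoted in the excerpt together with upper semicontinuity of fiber dimension, equivalently of colength, in flat families), the intersection-of-open-sets argument closes the proof. One should also note the degenerate possibility that the chosen $I$-generic tuple fails to lie in the $J$-subspace at all; this is handled by instead picking the tuple from $U'\times\{0\}$ first — it is admissible for $J$, gives $\sigma(J_1,\dots,J_n)$, and lies in $I_1\oplus\cdots\oplus I_n$, so $e$ of it is $\geq \sigma(I_1,\dots,I_n)$ by the minimality property applied now to the $I$-family. Either way the inequality drops out, so the cleanest write-up is: take a tuple that is generic for $J$; it lies in the $I$-family, hence $\sigma(I_1,\dots,I_n)\leq e(\text{tuple})=\sigma(J_1,\dots,J_n)$.
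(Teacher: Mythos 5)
The conclusion is reachable along your lines, and your finiteness argument via Proposition \ref{sigmaexists} is fine (including the reduction to an infinite residue field), but the proof of the inequality hinges on a statement you assert rather than prove, and your proposed justification of it is not correct as stated. You need that $\sigma(I_1,\dots,I_n)$ is the \emph{minimum} of $e(g_1,\dots,g_n)$ over all tuples $(g_1,\dots,g_n)\in I_1\oplus\cdots\oplus I_n$ generating an $m$-primary ideal, not merely its value on a generic tuple. This is true, but it is a genuine theorem (Rees's minimality theorem for joint reductions; see \cite[\S 17.4]{HunekeSwanson2006}), and it does not follow from ``upper semicontinuity of fiber dimension, equivalently of colength, in flat families'': the colength $\dim_k R/\langle g_1,\dots,g_n\rangle$ is indeed semicontinuous, but it is not the multiplicity $e(g_1,\dots,g_n)$, and semicontinuity of colength does not transfer to $e$. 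Note also that the simple monotonicity $e(\mathfrak b_1,\dots,\mathfrak b_n)\geq e(\mathfrak a_1,\dots,\mathfrak a_n)$ for $m$-primary ideals $\mathfrak b_i\subseteq\mathfrak a_i$ does not apply directly here, because the individual ideals $\langle g_i\rangle$ are not $m$-primary. Once Rees's theorem is invoked properly, your ``cleanest write-up'' (take a tuple generic for the $J_i$; it lies in $I_1\oplus\cdots\oplus I_n$ and generates an $m$-primary ideal, so $\sigma(I_1,\dots,I_n)\leq e(\text{tuple})=\sigma(J_1,\dots,J_n)$) does close the argument; the intermediate discussion about intersecting open sets in the two coefficient spaces is then unnecessary.

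You should also be aware that there is a much shorter route, which is the one suggested by Definition \ref{lasigma} itself (the paper does not reprove the lemma but cites \cite{Bivia2009}). For every $r$ the ideals $J_i+m^r\subseteq I_i+m^r$ are $m$-primary, so the classical monotonicity of mixed multiplicities under inclusion gives $e(J_1+m^r,\dots,J_n+m^r)\geq e(I_1+m^r,\dots,I_n+m^r)$. Hence the sequence on the right is bounded above by $\sigma(J_1,\dots,J_n)<\infty$, which yields simultaneously $\sigma(I_1,\dots,I_n)<\infty$ and $\sigma(J_1,\dots,J_n)\geq\sigma(I_1,\dots,I_n)$, with no genericity argument, no residue-field extension, and no appeal to Proposition \ref{sigmaexists} or to Rees's joint-reduction theorem.
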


\begin{rem}\label{sigmafinita}
It is worth to point out that, if $I_1,\dots,I_n$ is a set of ideals
of $R$ such that
$\sigma(I_1,\dots, I_n)<\infty$, then $I_1+\cdots+I_n$ is an ideal of
finite colength. Obviously the converse is not true.
\end{rem}

Now we recall some basic definitions.  Let us fix a coordinate system
$x_1,\dots,x_n$ in $\C^n$.  If $k=(k_1,\dots, k_n)\in\Z^n_+$, the we
will denote the monomial $x_1^{k_1}\cdots x_n^{k_n}$ by $x^k$.
If $h\in\O_n$ and $h=\sum_ka_kx^k$ denotes the Taylor expansion of $h$
around the origin, then the {\it support of $h$} is the set
$\supp(h)=\{k\in\Z_+:a_k\neq 0\}$.  If $h\neq 0$, the {\it Newton
polyhedron of $h$}, denoted by $\Gamma_+(h)$, is the convex hull of
the set $\{k+v: k\in\supp(h), v\in\R^n_+\}$.  If $h=0$, then we set
$\Gamma_+(h)=\emptyset$.  If $I$ is an ideal of $\O_n$ and
$g_1,\dots,g_s$ is a generating system of $I$, then we define the {\it
Newton polyhedron of $I$} as the convex hull of $\Gamma_+(g_1)\cup
\cdots \cup\Gamma_+(g_r)$.  It is easy to check that the definition of
$\Gamma_+(I)$ does not depend on the chosen generating system of $I$.
We say that $I$ is a {\it monomial ideal} of $\O_n$ when $I$ admits a
generating system formed by monomials.

\begin{defn}
Let $I_1,\dots, I_n$ be monomial ideals of $\O_n$ such that
$\sigma(I_1,\dots, I_n)<\infty$.  Then we denote by $\mathcal
S(I_1,\dots, I_n)$ the family of those maps $g=(g_1,\dots,
g_n):(\C^n,0)\to (\C^n,0)$ such that $g^{-1}(0)=\{0\}$, $g_i\in I_i$,
for all $i=1,\dots, n$, and $\sigma(I_1,\dots, I_n)=e(g_1,\dots,
g_n)$, where $e(g_1,\dots, g_n)$ stands for the multiplicity of the
ideal of $\O_n$ generated by $g_1,\dots, g_n$.  The elements of
$\mathcal S(I_1,\dots, I_n)$ are characterized in
\cite[Theorem 3.10]{Bivia2008}.

We denote by $\mathcal S_0(I_1,\dots, I_n)$ the set formed by the maps
$g=(g_1,\dots, g_n)\in \mathcal S(I_1,\dots, I_n)$ such that
$\Gamma_+(g_i)=\Gamma_+(I_i)$, for all $i=1,\dots, n$.
\end{defn}

\section{The {\L}ojasiewicz exponent of a set of ideals}

If $g:(\C^n,0)\to (\C^n,0)$ is an analytic map germ such that
$g^{-1}(0)=\{0\}$, then we denote by $\LL_0(g)$ the {\L}ojasiewicz
exponent of the ideal of $\O_n$ generated by the component functions
of $g$.

Let $I_1,\dots, I_n$ be ideals of a local ring $(R,m)$ such that
$\sigma(I_1,\dots, I_n)<\infty$.  Then we define
\begin{equation}\label{laerre}
r(I_1,\dots, I_n)=\min\big\{r\in\Z_+: \sigma(I_1,\dots,
I_n)=e(I_1+m^r,\dots, I_n+m^r)\big\}.
\end{equation}

\begin{thm}\textnormal{\cite[p.\ 398]{Bivia2009}}\label{base}
Let $I_1,\dots, I_n$ be monomial ideals of $\O_n$ such that
$\sigma(I_1,\dots,
I_n)$ is finite. If $g\in\mathcal S_0(I_1,\dots, I_n)$, then $\mathscr
L_0(g)$ depends only on $I_1,\dots, I_n$ and it is given by:
\begin{equation}\label{intrinsec}
\mathscr L_0(g)=\min_{s\geqslant 1}\frac{r(I_1^s,\dots,
I_n^s)}{s}.
\end{equation}
\end{thm}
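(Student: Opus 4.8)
The plan is to express $\LL_0(g)$ in terms of integral closures and then to rewrite the resulting condition using mixed multiplicities, exactly as was done in the introduction for a single ideal. Write $I=\langle g_1,\dots,g_n\rangle$. Since $g\in\mathcal S_0(I_1,\dots,I_n)$, the Newton polyhedron of $I$ coincides with $\Gamma_+(I_1)+\cdots+\Gamma_+(I_n)$ (the Minkowski sum appearing in the definition of $\sigma$ via a sufficiently general element), and more importantly, because each $g_i$ has Newton polyhedron equal to $\Gamma_+(I_i)$, one knows (this is the content of \cite[Theorem 3.10]{Bivia2008} characterising $\mathcal S_0$) that $\overline{I^s}=\overline{I_1^s\cdots I_n^s}$ in the monomial sense — or at least that the relevant membership relations $m^p\subseteq\overline{I^q}$ are governed purely by $I_1,\dots,I_n$. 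The first step is therefore to establish the translation
\[
\LL_0(g)=\LL_0(I)=\inf\Bigl\{\tfrac{p}{q}\in\Q_+:\ m^p\subseteq\overline{I^q}\Bigr\}
=\inf\Bigl\{\tfrac{p}{q}:\ e(I^q)=e(I^q+m^p)\Bigr\},
\]
using Lejeune--Teissier and the Rees multiplicity theorem as recalled in the introduction.

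The second step is to replace the colength-style multiplicities $e(I^q)$ and $e(I^q+m^p)$ by the Rees' mixed multiplicities $\sigma$. Because $g\in\mathcal S_0(I_1^s,\dots,I_n^s)$ for every $s\geq 1$ as well (raising generators to Newton-polyhedron-faithful powers preserves membership in $\mathcal S_0$), Proposition~\ref{sigmaexists} gives $e(I^s)=e(g_1^{?},\dots)$… more precisely $\sigma(I_1^s,\dots,I_n^s)=e(I^s)$ where $I^s$ here should be read as the ideal generated by a sufficiently general element of $I_1^s\oplus\cdots\oplus I_n^s$, which by the $\mathcal S_0$ hypothesis has the same multiplicity as our fixed $g$. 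Similarly $e(I_1^s+m^r,\dots,I_n^s+m^r)$ computes $\sigma(I_1^s+m^r,\dots,I_n^s+m^r)$ (these ideals have finite colength), and by definition of $r(I_1^s,\dots,I_n^s)$ the smallest $r$ achieving the maximum $\sigma(I_1^s,\dots,I_n^s)=e(I^s)$ is precisely $r(I_1^s,\dots,I_n^s)$. Thus the condition ``$m^r\subseteq\overline{(I^s)}$'' becomes ``$e(I_1^s+m^r,\dots,I_n^s+m^r)=\sigma(I_1^s,\dots,I_n^s)$'', which holds exactly when $r\geq r(I_1^s,\dots,I_n^s)$.

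Combining these, the set of admissible ratios $\tfrac{p}{q}$ with $m^p\subseteq\overline{I^q}$ is, after writing $s=q$ and $r=p$, exactly $\{\tfrac{r}{s}:\ r\geq r(I_1^s,\dots,I_n^s),\ s\geq 1\}$, whose infimum is $\min_{s\geq 1}\frac{r(I_1^s,\dots,I_n^s)}{s}$. That the infimum is attained (a genuine minimum) follows from the subadditivity-type estimate $r(I_1^{s+t},\dots,I_n^{s+t})\leq r(I_1^s,\dots,I_n^s)+r(I_1^t,\dots,I_n^t)$, which forces the sequence $r(I_1^s,\dots,I_n^s)/s$ to be bounded below by its eventual value and to attain its infimum — a Fekete-lemma argument — so the $\inf$ over $\Q_+$ coincides with $\min_{s\geq 1}$. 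This also shows the expression depends only on $I_1,\dots,I_n$ and not on the particular $g\in\mathcal S_0$.

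The main obstacle I expect is the second step: justifying rigorously that the colength multiplicity $e(I^s)$ of the ideal generated by our specific $g\in\mathcal S_0$ equals $\sigma(I_1^s,\dots,I_n^s)$, and that the integral-closure membership $m^p\subseteq\overline{I^q}$ is insensitive to replacing $I^q$ by the ``generic'' ideal from $I_1^q\oplus\cdots\oplus I_n^q$. This is where the full force of the characterisation of $\mathcal S_0(I_1,\dots,I_n)$ in \cite[Theorem 3.10]{Bivia2008} is needed — one must check that the $\mathcal S_0$ condition is strong enough to guarantee $\overline{I^q}$ itself (not merely its multiplicity) is determined by the Newton data, or else argue entirely at the level of mixed multiplicities and the equivalence ``$J^q\subseteq\overline{I^p}\iff\overline\nu_I(J)\geq p/q$'' quoted in the introduction. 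Everything else is bookkeeping with the definitions of $\sigma$ and $r$.
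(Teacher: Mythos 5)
First, a point of reference: the paper does not prove Theorem~\ref{base} at all --- it is quoted from \cite[p.~398]{Bivia2009} --- so there is no in-paper proof to compare against; I can only judge your outline on its own terms and against the strategy of the cited source. Your first step (translating $\LL_0(g)$ into the conditions $m^p\subseteq\overline{I^q}$ via Lejeune--Teissier and then into $e(I^q)=e(I^q+m^p)$ via Rees' multiplicity theorem) is correct and is indeed how one must begin. The identification $e(I^q)=\sigma(I_1^q,\dots,I_n^q)$ is also salvageable: $(g_1^q,\dots,g_n^q)$ (not $g$ itself, which does not lie in $I_1^q\oplus\cdots\oplus I_n^q$) belongs to $\mathcal S_0(I_1^q,\dots,I_n^q)$ and generates a reduction of $I^q$. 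The genuine gap is your second step, which is where the entire content of the theorem lives: you need the equivalence of $e(I^q)=e(I^q+m^p)$ with $e(I_1^q+m^p,\dots,I_n^q+m^p)=\sigma(I_1^q,\dots,I_n^q)$, i.e.\ with $p\geq r(I_1^q,\dots,I_n^q)$. But $e(I^q+m^p)$ is the Samuel multiplicity of a single ideal built from the \emph{specific} germs $g_i$, whereas $e(I_1^q+m^p,\dots,I_n^q+m^p)$ is a mixed multiplicity of $n$ \emph{different} ideals; neither inequality between these two numbers is a formal consequence of the definitions or of Proposition~\ref{sigmaexists} (sufficiently general elements of $I_i^q+m^p$ are not of the form $g_i^q+u_i$, and $I_i^q\not\subseteq I^q$). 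This equivalence is exactly where the monomial hypothesis, the Newton-polyhedron condition $\Gamma_+(g_i)=\Gamma_+(I_i)$, and the characterisation of $\mathcal S(I_1,\dots,I_n)$ from \cite[Theorem 3.10]{Bivia2008} must be brought to bear. You explicitly flag this as ``the main obstacle'' and leave it unresolved, so the proposal reduces the theorem to its hardest step rather than proving it.

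A second, independent gap is the claim that the infimum is attained ``by a Fekete-lemma argument.'' Fekete's lemma for a subadditive sequence $(a_s)$ gives $\lim_s a_s/s=\inf_s a_s/s$, but it does \emph{not} give attainment: $a_s=s+1$ is subadditive and $a_s/s$ never attains its infimum $1$. (You would also need to prove the asserted subadditivity of $s\mapsto r(I_1^s,\dots,I_n^s)$; Lemma~\ref{rpowers} only gives $r(I_1^s,\dots,I_n^s)\leq s\,r(I_1,\dots,I_n)$.) The attainment of the minimum is again a consequence of the monomial structure --- for monomial data $r(I_1^s,\dots,I_n^s)$ is eventually of the form $\lceil s\,c\rceil$ for a rational constant $c$ determined by the Newton polyhedra, so the quotient equals $c$ exactly when $s$ is divisible by the denominator of $c$ (compare the explicit computation of $r_J(\B_{r_1}^s,\dots,\B_{r_n}^s)$ in the proof of Theorem~\ref{main}) --- and not of any abstract subadditivity argument.
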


By the proof of the above theorem it is concluded that the infimum of
the sequence $\{\frac{r(I_1^s,\dots, I_n^s)}{s}\}_{s\geq 1}$ is
actually a minimum.  Theorem \ref{base} motivates the following
definition.

\begin{defn}
Let $(R,m)$ be a Noetherian local ring of dimension $n$. Let
$I_1,\dots,
I_n$ be ideals of $R$. Let us suppose that $\sigma(I_1,\dots,
I_n)<\infty$. We define the {\it {\L}ojasiewicz exponent of
$I_1,\dots, I_n$} as
$$
\mathscr L_0(I_1,\dots, I_n)=\inf_{s\geq 1}\frac{r(I_1^s,\dots,
I_n^s)}{s}.
$$
\end{defn}

As we will see in Lemma \ref{rpowers}, we have that
$r(I_1^s,\dots, I_n^s)\leq s r(I_1,\dots, I_n)$, for all $s\in \Z_{\geq 1}$.
Therefore $\LL_0(I_1,\dots, I_n)\leq r(I_1,\dots, I_n)$.

We can extend Definition \ref{lasigma} by replacing the maximal ideal
$m$ by an arbitrary ideal of finite colength,
but the resulting number is the same. That is, under the hypothesis
of Definition \ref{lasigma}, let us denote
by $J$ an ideal of $R$ of finite colength and let us suppose that
$\sigma(I_1,\dots, I_n)<\infty$. Then we define
$$
\sigma_J(I_1,\dots, I_n)=\max_{r\in\Z_+}\,e(I_1+J^r,\dots, I_n+J^r).
$$
An easy computation reveals that $\sigma_J(I_1,\dots,
I_n)=\sigma(I_1,\dots, I_n)$. We also define
\begin{equation}\label{laerresubJ}
r_J(I_1,\dots, I_n)=\min\big\{r\in\Z_+: \sigma(I_1,\dots,
I_n)=e(I_1+J^r,\dots, I_n+J^r)\big\}.
\end{equation}

Let $I$ be an ideal of $R$ of finite colength. Then we denote by $r_J(I)$ the number
$r_J(I,\dots, I)$, where $I$ is repeated $n$ times.  We deduce from
the Rees' multiplicity theorem that, if $R$ is quasi-unmixed, then $r_J(I)=\min\{r\geq 1: J^r\subseteq
\overline I\}$.

\begin{lem}\label{rpowers}
Let $(R,m)$ be a Noetherian local ring of dimension $n$. Let
$I_1,\dots,
I_n$ be ideals of $R$ such that $\sigma(I_1,\dots, I_n)<\infty$ and
let $J$ be an $m$-primary ideal. Then
\begin{align*}
r_J(I_1^s,\dots, I_n^s)&\leq sr_J(I_1,\dots, I_n)\\
r_{J^s}(I_1,\dots, I_n)&\geq \frac{1}{s}r_J(I_1,\dots, I_n),
\end{align*}
for all integer $s\geq 1$.
\end{lem}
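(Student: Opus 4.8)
The plan is to prove the two inequalities separately, in each case translating the statement about $r_J(\cdots)$ into a statement about equality of a mixed multiplicity with the fixed value $\sigma=\sigma(I_1,\dots,I_n)$, and then exploiting Lemma \ref{reverseincl} together with elementary inclusions among the ideals $I_i^s+J^r$.

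For the first inequality, set $r_0=r_J(I_1,\dots,I_n)$, so that $e(I_1+J^{r_0},\dots,I_n+J^{r_0})=\sigma$. The key observation is the inclusion $I_i^s+J^{sr_0}\subseteq (I_i+J^{r_0})^s$, which follows by expanding the $s$-th power and noting each term either lies in $I_i^s$ or contains a factor from $J^{r_0}$ enough times; more precisely $(I_i+J^{r_0})^s\supseteq I_i^s$ and $(I_i+J^{r_0})^s\supseteq J^{sr_0}$, hence it contains the sum. Applying Lemma \ref{reverseincl} with $J_i=I_i^s+J^{sr_0}$ and the bigger ideals $(I_i+J^{r_0})^s$, and using that $\sigma(I_1^s,\dots,I_n^s)=\sigma(I_1,\dots,I_n)=\sigma$ (a general fact about $\sigma$ and powers, or one can argue directly via general elements as in Proposition \ref{sigmaexists}), we get
$$
\sigma \;=\; \sigma(I_1^s,\dots,I_n^s)\;\geq\; e\bigl(I_1^s+J^{sr_0},\dots,I_n^s+J^{sr_0}\bigr)\;\geq\; e\bigl((I_1+J^{r_0})^s,\dots,(I_n+J^{r_0})^s\bigr).
$$
The rightmost term equals $e(I_1+J^{r_0},\dots,I_n+J^{r_0})$ up to the normalization factor $s^n$ for multiplicities of powers; since what we actually need is the equation $e(I_1^s+J^{sr_0},\dots)=\sigma(I_1^s,\dots,I_n^s)$, and the chain of inequalities forces equality throughout once we know the outer terms agree, we conclude $r_J(I_1^s,\dots,I_n^s)\leq sr_0$. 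I would be careful here to use the monotonicity $e(I_1+J^a,\dots)\geq e(I_1+J^b,\dots)$ for $a\leq b$ to sandwich correctly rather than relying on $s^n$-scaling of mixed multiplicities of powers.

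For the second inequality, write $t_0=r_{J^s}(I_1,\dots,I_n)$, so $e(I_1+(J^s)^{t_0},\dots,I_n+(J^s)^{t_0})=\sigma$, i.e. $e(I_1+J^{st_0},\dots)=\sigma$. By definition of $r_J$ as a minimum, this gives $r_J(I_1,\dots,I_n)\leq st_0=s\,r_{J^s}(I_1,\dots,I_n)$, which rearranges to exactly $r_{J^s}(I_1,\dots,I_n)\geq \tfrac1s r_J(I_1,\dots,I_n)$. This direction is essentially immediate from the definitions once one observes $(J^s)^t=J^{st}$.

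The main obstacle I anticipate is the first inequality, specifically making rigorous the passage through $e((I_i+J^{r_0})^s,\dots)$: one must ensure the comparison really delivers the equation $\sigma(I_1^s,\dots,I_n^s)=e(I_1^s+J^{sr_0},\dots,I_n^s+J^{sr_0})$ and not merely an inequality in the wrong direction. The clean fix is to combine (a) $\sigma\geq e(I_1^s+J^{sr_0},\dots)$ from Lemma \ref{reverseincl} applied to $I_i^s\subseteq I_i^s+J^{sr_0}$, giving $\sigma(I_1^s,\dots,I_n^s)=\sigma\geq e(I_1^s+J^{sr_0},\dots)$, together with (b) the general fact that $e(K_1,\dots,K_n)\geq \sigma$ whenever $K_i\supseteq I_i^s$ fails to reach finite colength is impossible — rather, that $e$ of ideals containing $I_i^s$ with finite colength is at most... — so instead I would invoke that $r_J$ is well-defined and that the sequence $e(I_1^s+J^r,\dots)$ is non-increasing in $r$ and stabilizes at $\sigma(I_1^s,\dots,I_n^s)=\sigma$, so equality at level $r=sr_0$ is equivalent to the value there being $\leq\sigma$, which is what the displayed chain gives. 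That reduces everything to the inclusion $I_i^s+J^{sr_0}\subseteq(I_i+J^{r_0})^s$ plus Lemma \ref{reverseincl}, with no appeal to multiplicities of powers at all.
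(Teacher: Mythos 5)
Your overall skeleton for the first inequality is essentially the paper's: reduce to showing $e(I_1^s+J^{sr_0},\dots,I_n^s+J^{sr_0})=\sigma(I_1^s,\dots,I_n^s)$, compare with $(I_i+J^{r_0})^s$ via the inclusion $I_i^s+J^{sr_0}\subseteq (I_i+J^{r_0})^s$, and your treatment of the second inequality is correct and matches the paper's one-line argument. However, there is a genuine error at the heart of your argument: the identity $\sigma(I_1^s,\dots,I_n^s)=\sigma(I_1,\dots,I_n)$ is false. The correct scaling is $\sigma(I_1^s,\dots,I_n^s)=s^n\sigma(I_1,\dots,I_n)$ (this is Lemma 2.6 of \cite{Bivia2009}, which the paper invokes at exactly this point); already for $I_1=\cdots=I_n=m$ your claim would read $e(m^s)=e(m)$ instead of $e(m^s)=s^ne(m)$. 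With the wrong value of $\sigma(I_1^s,\dots,I_n^s)$ the two outer terms of your displayed chain do not agree, so the chain does not collapse to equalities.

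Your proposed ``clean fix'' compounds this by reversing the monotonicity: as $r$ grows, $J^r$ shrinks, so $I_i^s+J^r$ shrinks and $e(I_1^s+J^r,\dots,I_n^s+J^r)$ is \emph{non-decreasing} in $r$, increasing up to its ceiling $\sigma(I_1^s,\dots,I_n^s)$. Hence $r_J(I_1^s,\dots,I_n^s)\leq sr_0$ is equivalent to the \emph{lower} bound $e(I_1^s+J^{sr_0},\dots,I_n^s+J^{sr_0})\geq \sigma(I_1^s,\dots,I_n^s)=s^n\sigma$, not to an upper bound by $\sigma$. The only way to produce this lower bound along your route is to evaluate the rightmost term of the chain as $e\bigl((I_1+J^{r_0})^s,\dots,(I_n+J^{r_0})^s\bigr)=s^n\,e(I_1+J^{r_0},\dots,I_n+J^{r_0})=s^n\sigma$, so the appeal to the $s^n$-scaling of mixed multiplicities of powers cannot be dispensed with, contrary to your closing claim. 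Once you restore both scaling facts, your argument closes and is in substance the paper's proof (the paper phrases the ideal comparison through $\overline{I_i^s+J^{rs}}=\overline{(I_i+J^{r})^s}$ and the invariance of mixed multiplicities under integral closure, whereas your one-sided inclusion plus Lemma \ref{reverseincl} suffices).
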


\begin{proof}
For the first inequality, set $r=r_{J}(I_{1},\ldots,I_{n})$. So
that $\sigma(I_{1},\ldots,I_{n})= e(I_{1}+J^{r},\ldots,I_{n}+J^{r})$.
It is enough to prove that
$\sigma(I_{1}^{s},\ldots,I_{n}^{s})=
e(I_{1}^{s}+J^{rs},\ldots,I_{n}^{s}+J^{rs})$:
\begin{align*}
    e(I_{1}^{s}+J^{rs},\ldots,I_{n}^{s}+J^{rs}) &=
    e(\overline{I_{1}^{s}+J^{rs}},\ldots,\overline{I_{n}^{s}+J^{rs}})
     =
     e(\overline{(I_{1}+J^{r})^{s}},\ldots,\overline{(I_{n}+J^{r})^{s}}) \\
     &= e((I_{1}+J^{r})^{s},\ldots,(I_{n}+J^{r})^{s})
     = s^{n}e(I_{1}+J^{r},\ldots,I_{n}+J^{r}) \\
     &= s^{n}\sigma(I_{1},\ldots,I_{n})
     =\sigma(I_{1}^{s},\ldots,I_{n}^{s}),
\end{align*}
where last equality comes from \cite[Lemma 2.6]{Bivia2009}.

The second inequality comes directly from the definition of
$r_{J^{s}}(I_{1},\ldots,I_{n})$.
\end{proof}

It is easy to
find examples of ideals $I$ and $J$
such that $r_J(I_1,\dots, I_n)\neq r(I_1,\dots, I_n)$ in general.
This fact motivates the following definition.

\begin{defn}
Let $(R,m)$ be a Noetherian local ring of dimension $n$. Let
$I_1,\dots,
I_n$ be ideals of $R$ such that $\sigma(I_1,\dots, I_n)<\infty$. Let
$J$ be an $m$-primary ideal of $R$.
We define the {\it {\L}ojasiewicz exponent of $I_1,\dots, I_n$ with
respect to $J$}, denoted by $\LL_J(I_1,\dots, I_n)$, as
\begin{equation}\label{LJI}
\LL_J(I_1,\dots, I_n)=\inf_{s\geq 1}\frac{r_J(I_1^s,\dots, I_n^s)}{s}.
\end{equation}
If $I$ is an $m$-primary ideal of $R$, then we denote by $\LL_J(I)$
the number $\LL_J(I,\dots, I)$, where $I$ is repeated $n$ times.
\end{defn}

\begin{rem} \label{RemLimitInf}
Under the conditions of the previous definition, we observe that
$\LL_{J}(I_{1},\ldots,I_{n})$ can be seen as an inferior limit:
\begin{equation*}
    \LL_{J}(I_{1},\ldots,I_{n})=
    \liminf_{s\to\infty}\frac{r_J(I_1^s,\dots, I_n^s)}{s}.
\end{equation*}
Set $\ell=\LL_{J}(I_{1},\ldots,I_{n})$.  In order to prove the
equality above, it is enough to see that for all $\epsilon>0$ and all
$p\in \Z_+$, there exists an integer $m\geq p$ such that
\begin{equation*}
    \frac{r_J(I_1^m,\dots, I_n^m)}{m}\leq
    \ell+\epsilon.
\end{equation*}
Let us fix an $\epsilon>0$ and an integer $p\in\Z_+$.  By
definition, there exists $q\in\Z_+$ such that $$
\dfrac{r_{J}(I_{1}^{q},\ldots,I_{n}^{q})}{q}\leq
\ell+\epsilon. $$
Let $s\in \Z_+$ such that $sq\geq p$.
Then, from Lemma \ref{rpowers} we obtain that
\begin{equation*}
    \frac{r_J(I_1^{sq},\dots, I_n^{sq})}{sq}\leq
    \frac{r_J(I_1^{q},\dots, I_n^{q})}{q}\leq \ell+\epsilon.
\end{equation*}
\end{rem}

A straightforward reproduction of the argument in the proof of Theorem
\ref{base} consisting on replacing the powers of the maximal ideal by
the powers of given ideal of finite colength leads to the following
result, which is analogous to Theorem \ref{base}.

\begin{thm}\label{base2}
Let $I_1,\dots, I_n$ be monomial ideals of $\O_n$ such that
$\sigma(I_1,\dots, I_n)$ is finite and let $J$ be a monomial ideal of
$\O_n$ of finite colength.  Then the sequence
$\{\frac{r_J(I_1^s,\dots, I_n^s)}{s}\}_{s\geq 1}$ attains a minimum
and if $g\in\mathcal S_0(I_1,\dots, I_n)$ then
\begin{equation}\label{intrinsec2}
\mathscr L_J(g)=\LL_J(I_1,\dots, I_n)=\min_{s\geqslant 1}\frac{r_J(I_1^s,\dots, I_n^s)}{s}.
\end{equation}
\end{thm}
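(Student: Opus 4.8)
The plan is to follow the blueprint of the proof of Theorem \ref{base}, substituting the $m$-primary monomial ideal $J$ wherever powers $m^r$ of the maximal ideal occur. The statement has two parts: first, that the sequence $\{r_J(I_1^s,\dots,I_n^s)/s\}_{s\geq 1}$ attains its infimum; second, that for $g\in\mathcal S_0(I_1,\dots,I_n)$ the {\L}ojasiewicz exponent $\LL_J(g)$ equals this minimum. I would begin by recording the two facts that make the whole argument work: from Lemma \ref{rpowers} we have the subadditivity-type bound $r_J(I_1^{ab},\dots,I_n^{ab})\leq a\,r_J(I_1^b,\dots,I_n^b)$ for all $a,b\geq 1$ (apply the first inequality of the lemma with the ideals $I_i^b$ and exponent $a$), and the combinatorial core of Theorem \ref{base} shows that for monomial ideals the numbers $r_J(I_1^s,\dots,I_n^s)$ are governed by the Newton polyhedra $\Gamma_+(I_i)$, $\Gamma_+(J)$ alone, via a piecewise-linear/integer-programming description.

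For the attainment of the minimum, I would argue exactly as in \cite{Bivia2009}: the monomial hypothesis lets one express $r_J(I_1^s,\dots,I_n^s)$ as the smallest $r$ for which a certain polyhedral containment $J^r\cdot(\text{something})\subseteq$ a mixed object built from $I_1^s,\dots,I_n^s$ holds, and by homogeneity in $s$ this becomes a question about rational polyhedra. Concretely, $r_J(I_1^s,\dots,I_n^s)/s$ takes values in a set of rationals with bounded denominators (the denominators coming from the finitely many vertices of the relevant polyhedra associated to $I_1,\dots,I_n,J$), and the sequence is bounded below by $0$; a set of rationals with uniformly bounded denominators that is bounded below attains its infimum. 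Combined with the inequality $r_J(I_1^{ks},\dots,I_n^{ks})/(ks)\leq r_J(I_1^s,\dots,I_n^s)/s$ from the lemma, this shows the infimum defining $\LL_J(I_1,\dots,I_n)$ is realized at some finite $s$, and also (as in Remark \ref{RemLimitInf}) that it equals the $\liminf$.

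For the equality $\LL_J(g)=\LL_J(I_1,\dots,I_n)$, the point is that $\LL_J(g)$ is, by the Lejeune--Teissier type characterization adapted to an arbitrary finite-colength ideal $J$, equal to $\inf\{p/q:J^p\subseteq\overline{\langle g_1,\dots,g_n\rangle^q}\}=\inf_s r_J(\langle g_1,\dots,g_n\rangle^s)/s$ where we use the quasi-unmixedness remark that $r_J(\mathfrak a)=\min\{r:J^r\subseteq\overline{\mathfrak a}\}$ for a finite-colength ideal $\mathfrak a$. The hypothesis $g\in\mathcal S_0(I_1,\dots,I_n)$ forces $\Gamma_+(g_i)=\Gamma_+(I_i)$, hence $\overline{\langle g_1,\dots,g_n\rangle}$ and $\overline{I_1+\cdots+I_n}$ share the same Newton polyhedron, and more importantly $g\in\mathcal S_0$ is exactly the genericity condition under which $e((g_1,\dots,g_n)^s+J^r)=e(I_1^s+J^r,\dots,I_n^s+J^r)=\sigma(I_1^s,\dots,I_n^s)$ for the appropriate range of $r$; this identifies $r_J(\langle g_1,\dots,g_n\rangle^s)$ with $r_J(I_1^s,\dots,I_n^s)$ for every $s$. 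Taking the infimum over $s$ gives the claimed equality.

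The main obstacle, and the step I would spell out most carefully, is verifying that $g\in\mathcal S_0(I_1,\dots,I_n)$ implies $r_J(\langle g_1,\dots,g_n\rangle^s)=r_J(I_1^s,\dots,I_n^s)$ for all $s\geq 1$ simultaneously, i.e.\ that the single genericity condition defining $\mathcal S_0$ is strong enough to control all the powers at once. In the proof of Theorem \ref{base} this is handled by the explicit characterization of $\mathcal S_0$ via Newton polyhedra together with \cite[Theorem 3.10]{Bivia2008}, which shows that $\Gamma_+(g_i)=\Gamma_+(I_i)$ already pins down all the mixed multiplicities $e(I_1^s+m^r,\dots,I_n^s+m^r)$ on the nose; the only thing that changes here is that $m^r$ is replaced by $J^r$, and since $J$ is itself a monomial ideal of finite colength the same polyhedral bookkeeping applies verbatim. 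So apart from transcribing that argument with $J$ in place of $m$, no new ideas are needed — the routine check that every appearance of $m^r$ can be replaced by $J^r$ without disturbing the monomial/Newton-polyhedron combinatorics is what one must confirm.
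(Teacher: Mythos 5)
Your proposal is correct and coincides with the paper's own treatment: the paper gives no separate proof of Theorem \ref{base2}, stating only that the argument of Theorem \ref{base} (from \cite{Bivia2009}) goes through verbatim once every power $m^r$ is replaced by $J^r$, the monomial hypothesis on $J$ ensuring the Newton-polyhedron bookkeeping is undisturbed. That is exactly the route you take, so no further comparison is needed.
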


\begin{lem}\label{Lpowers}
Under the hypothesis of Lemma \textnormal{\ref{rpowers}} we have
\begin{align*}
\LL_J(I_1^s,\dots, I_n^s)&= s\LL_J(I_1,\dots, I_n)\\
\LL_{J^s}(I_1,\dots, I_n)&= \frac{1}{s}\LL_J(I_1,\dots, I_n),
\end{align*}
for all $s\in\Z_{\geq 1}$.
\end{lem}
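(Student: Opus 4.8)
The plan is to reduce both identities to Lemma \ref{rpowers} together with the defining formula \eqref{LJI} and the ``liminf'' reformulation of Remark \ref{RemLimitInf}. For the first identity, $\LL_J(I_1^s,\dots,I_n^s)=s\,\LL_J(I_1,\dots,I_n)$, I would unwind the definition: by \eqref{LJI},
$$
\LL_J(I_1^s,\dots,I_n^s)=\inf_{t\geq 1}\frac{r_J\big((I_1^s)^t,\dots,(I_n^s)^t\big)}{t}
=\inf_{t\geq 1}\frac{r_J(I_1^{st},\dots,I_n^{st})}{t}
=s\inf_{t\geq 1}\frac{r_J(I_1^{st},\dots,I_n^{st})}{st}.
$$
Now the sequence $\big\{\tfrac{r_J(I_1^k,\dots,I_n^k)}{k}\big\}_{k\geq 1}$ restricted to the multiples $k=st$ has infimum at least $\LL_J(I_1,\dots,I_n)$ trivially, so $\LL_J(I_1^s,\dots,I_n^s)\geq s\,\LL_J(I_1,\dots,I_n)$. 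For the reverse inequality I would invoke Remark \ref{RemLimitInf}: since $\LL_J(I_1,\dots,I_n)$ is a $\liminf$, for every $\epsilon>0$ there are arbitrarily large $k$ with $\tfrac{r_J(I_1^k,\dots,I_n^k)}{k}\leq \LL_J(I_1,\dots,I_n)+\epsilon$, and by the same argument used in Remark \ref{RemLimitInf} (replacing $k$ by a suitable multiple $sk'$ via the first inequality of Lemma \ref{rpowers}) one finds such $k$ among the multiples of $s$; this forces $s\inf_{t}\tfrac{r_J(I_1^{st},\dots,I_n^{st})}{st}\leq s\,\LL_J(I_1,\dots,I_n)$, giving equality.

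For the second identity, $\LL_{J^s}(I_1,\dots,I_n)=\tfrac1s\,\LL_J(I_1,\dots,I_n)$, the natural approach is to relate $r_{J^s}$ to $r_J$ directly. By definition \eqref{laerresubJ}, $r_{J^s}(I_1,\dots,I_n)$ is the least $r$ with $\sigma(I_1,\dots,I_n)=e(I_1+(J^s)^r,\dots,I_n+(J^s)^r)=e(I_1+J^{sr},\dots,I_n+J^{sr})$; comparing with the analogous description of $r_J$ and using that $\{k:\sigma=e(I_1+J^k,\dots,I_n+J^k)\}$ is upward closed (which follows from Lemma \ref{reverseincl} applied to $I_i+J^k\subseteq I_i+J^{k'}$ for $k\geq k'$), one gets the precise relation $r_{J^s}(I_1,\dots,I_n)=\big\lceil \tfrac1s\, r_J(I_1,\dots,I_n)\big\rceil$, and in particular $\tfrac1s r_J \leq r_{J^s}\leq \tfrac1s r_J + 1$. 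Applying this with $I_i$ replaced by $I_i^t$ and dividing by $t$,
$$
\frac{1}{s}\cdot\frac{r_J(I_1^t,\dots,I_n^t)}{t}\;\leq\;\frac{r_{J^s}(I_1^t,\dots,I_n^t)}{t}\;\leq\;\frac{1}{s}\cdot\frac{r_J(I_1^t,\dots,I_n^t)}{t}+\frac{1}{t}.
$$
Taking $\inf$ over $t\geq 1$ on the left gives $\tfrac1s\LL_J(I_1,\dots,I_n)\leq \LL_{J^s}(I_1,\dots,I_n)$, while taking $\liminf_{t\to\infty}$ on the right — legitimate by Remark \ref{RemLimitInf} — kills the $\tfrac1t$ term and yields the reverse inequality $\LL_{J^s}(I_1,\dots,I_n)\leq \tfrac1s\LL_J(I_1,\dots,I_n)$.

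The main obstacle I anticipate is the bookkeeping around infima versus liminf's: the raw definition of $\LL_J$ is an infimum over \emph{all} $s$, but the first identity forces us to take an infimum over the subsequence of multiples of $s$, and a priori an infimum can drop when passing to a subsequence. The resolution is exactly Remark \ref{RemLimitInf}, which guarantees the infimum is realized ``cofinally'' (it equals the liminf), so restricting to any arithmetic progression of indices does not change the value. Once that reformulation is in hand, both identities are essentially formal consequences of Lemma \ref{rpowers} and the ceiling relation for $r_{J^s}$ versus $r_J$; the second half of each identity — the inequality going the ``wrong'' way relative to Lemma \ref{rpowers} — is where the liminf reformulation does the real work. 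I would state the ceiling identity $r_{J^s}=\lceil r_J/s\rceil$ as a short preliminary observation (or fold it into the proof) since it is the only genuinely new computation needed beyond the already-established lemmas.
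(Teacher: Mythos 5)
Your proof is correct and follows essentially the same route as the paper's: both directions of the first identity come from reindexing the infimum together with the first inequality of Lemma \ref{rpowers}, and the second identity reduces to the two-sided bound $\tfrac1s r_J \leq r_{J^s} \leq \tfrac1s r_J + 1$ (which the paper derives as the strict inequality $r_J(I_1^p,\dots,I_n^p) > s(r_p-1)$) followed by the $\liminf$ reformulation of Remark \ref{RemLimitInf}. Your explicit ceiling formula $r_{J^s}=\big\lceil r_J/s\big\rceil$, obtained from the upward-closedness of $\{k:\sigma=e(I_1+J^k,\dots,I_n+J^k)\}$, is just a cleaner packaging of the same two bounds the paper uses.
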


\begin{proof}
For the first equality
\begin{equation*}
    \LL_{J}(I_{1}^{s},\ldots,I_{n}^{s})=
    \inf_{p\geq 1}\frac{r_J(I_1^{sp},\dots, I_n^{sp})}{p}=
    s\inf_{p\geq 1}\frac{r_J(I_1^{sp},\dots, I_n^{sp})}{sp}\geq
    s\LL_{J}(I_{1},\ldots,I_{n}).
\end{equation*}
On the other hand, by Lemma \ref{rpowers} we obtain
\begin{equation*}
    \inf_{p\geq 1}\frac{r_J(I_1^{sp},\dots, I_n^{sp})}{p}\leq
    s\inf_{p\geq 1}\frac{r_J(I_1^{p},\dots, I_n^{p})}{p}=
    s\LL_{J}(I_{1},\ldots,I_{n}).
\end{equation*}
Let us see the second equality. Applying Lemma \ref{rpowers} we have
\begin{equation*}
    \LL_{J^{s}}(I_{1},\ldots,I_{n})=
    \inf_{p\geq 1}\frac{r_{J^{s}}(I_1^{p},\dots, I_n^{p})}{p}\geq
    \frac{1}{s}\inf_{p\geq 1}\frac{r_{J}(I_1^{p},\dots, I_n^{p})}{p}=
    \frac{1}{s}\LL_{J}(I_{1},\ldots,I_{n}).
\end{equation*}
Let us denote the number $r_{J^{s}}(I_{1}^{p},\ldots,I_{n}^{p})$ by
$r_p$, for all $p\geq 1$.  Then
\begin{equation*}
    \sigma(I_{1}^{p},\ldots,I_{n}^{p})>
    e(I_{1}^{p}+J^{s(r_{p}-1)},\ldots,I_{n}^{p}+J^{s(r_{p}-1)}).
\end{equation*}
In particular
$$
r_{J}(I_{1}^{p},\ldots,I_{n}^{p})>s(r_{p}-1),
$$
for all $p\geq 1$.  Dividing the previous inequality by $p$ and
taking $\liminf_{p\to\infty}$ we obtain by Remark \ref{RemLimitInf},
that
\begin{equation*}
    \LL_{J}(I_{1},\ldots,I_{n})=
    \liminf_{p\to\infty}\frac{r_{J}(I_{1}^{p},\ldots,I_{n}^{p})}{p}\geq
    s\liminf_{p\to\infty}\left(\frac{r_{p}-1}{p}\right)=
    s\LL_{J^{s}}(I_{1},\ldots,I_{n}).
\end{equation*}
\end{proof}

\begin{lem}\label{transit}
Let $(R,m)$ be a quasi-unmixed Noetherian local ring of dimension
$n$. Let $I_1,\dots,
I_n$ be ideals of $R$ such that $\sigma(I_1,\dots, I_n)<\infty$. If
$J_1, J_2$ are $m$-primary ideals of $R$ then
$$
\LL_{J_1}(I_1,\dots, I_n) \leq \LL_{J_1}(J_2)\LL_{J_2}(I_1,\dots,
I_n).
$$
\end{lem}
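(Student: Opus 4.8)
The plan is to reduce the inequality to the elementary containment $J_1^{u}\subseteq\overline{J_2^{k}}$ between powers of the two $m$-primary ideals, to exploit the behaviour of mixed multiplicities under integral closure, and then to pass to the limit via Remark \ref{RemLimitInf}. The heart of the matter is the following claim, which I would prove first: \emph{if $u,k\in\Z_{\geq 1}$ satisfy $J_1^{u}\subseteq\overline{J_2^{k}}$, then for every tuple of ideals $A_1,\dots,A_n$ of $R$ with $\sigma(A_1,\dots,A_n)<\infty$ one has}
\[
r_{J_1}(A_1,\dots,A_n)\ \leq\ u\,\big\lceil r_{J_2}(A_1,\dots,A_n)/k\big\rceil .
\]

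To prove the claim, write $t=r_{J_2}(A_1,\dots,A_n)$ (the case $t=0$ is trivial, so assume $t\geq 1$) and $a=\lceil t/k\rceil$, so that $ka\geq t$. Every ideal appearing below contains a positive power of $J_1$ or of $J_2$, hence is $m$-primary, so on these ideals $\sigma$ agrees with the classical mixed multiplicity. Since $J_2^{ka}\subseteq J_2^{t}$, Lemma \ref{reverseincl} gives $e(A_1+J_2^{ka},\dots,A_n+J_2^{ka})\geq e(A_1+J_2^{t},\dots,A_n+J_2^{t})=\sigma(A_1,\dots,A_n)$, while $e(A_1+J_2^{ka},\dots,A_n+J_2^{ka})\leq\sigma_{J_2}(A_1,\dots,A_n)=\sigma(A_1,\dots,A_n)$; hence $e(A_1+J_2^{ka},\dots,A_n+J_2^{ka})=\sigma(A_1,\dots,A_n)$. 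Next $J_1^{ua}=(J_1^{u})^{a}\subseteq(\overline{J_2^{k}})^{a}\subseteq\overline{J_2^{ka}}$, so $A_i+J_1^{ua}\subseteq\overline{A_i+J_2^{ka}}$ for each $i$; applying Lemma \ref{reverseincl} once more, together with the invariance of the mixed multiplicity under replacing each argument by its integral closure (already used in the proof of Lemma \ref{rpowers}), we get $e(A_1+J_1^{ua},\dots,A_n+J_1^{ua})\geq e(A_1+J_2^{ka},\dots,A_n+J_2^{ka})=\sigma(A_1,\dots,A_n)$, and the opposite inequality is immediate; so equality holds and therefore $r_{J_1}(A_1,\dots,A_n)\leq ua$, proving the claim.

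With the claim established, fix any $u,k\in\Z_{\geq 1}$ with $J_1^{u}\subseteq\overline{J_2^{k}}$ (such pairs exist because $J_1$ and $J_2$ are $m$-primary). Applying the claim with $A_i=I_i^{s}$ and dividing by $s$ gives, for every $s\geq 1$,
\[
\frac{r_{J_1}(I_1^{s},\dots,I_n^{s})}{s}\ \leq\ \frac{u}{k}\cdot\frac{r_{J_2}(I_1^{s},\dots,I_n^{s})}{s}+\frac{u}{s}.
\]
Taking $\liminf_{s\to\infty}$ of both sides, observing that $u/s\to 0$ and invoking Remark \ref{RemLimitInf} for $J_1$ on the left and for $J_2$ on the right, we obtain $\LL_{J_1}(I_1,\dots,I_n)\leq\frac{u}{k}\,\LL_{J_2}(I_1,\dots,I_n)$. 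Finally, since $R$ is quasi-unmixed we have $r_{J_1}(J_2^{k})=\min\{r\geq 1:J_1^{r}\subseteq\overline{J_2^{k}}\}$ for every $k$, so the infimum of $u/k$ over all admissible pairs $(u,k)$ equals $\inf_{k\geq 1}r_{J_1}(J_2^{k})/k=\LL_{J_1}(J_2)$; taking this infimum in the displayed inequality yields $\LL_{J_1}(I_1,\dots,I_n)\leq\LL_{J_1}(J_2)\,\LL_{J_2}(I_1,\dots,I_n)$, as desired.

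The step I expect to be the main obstacle is the chain of (in)equalities establishing $e(A_1+J_1^{ua},\dots,A_n+J_1^{ua})=\sigma(A_1,\dots,A_n)$: one must apply Lemma \ref{reverseincl} in the correct direction, use the identity $\sigma_{J}=\sigma$, keep track of the $m$-primariness of every ideal in sight (so that $\sigma$ is genuinely a mixed multiplicity there), and invoke the integral-closure invariance of mixed multiplicities, all while bookkeeping the direction of each inequality. The limiting argument in the last step is routine given Remark \ref{RemLimitInf}, and the degenerate possibility $r_{J_2}(I_1^{s},\dots,I_n^{s})=0$, should it occur, is covered by the same computation with $a=0$.
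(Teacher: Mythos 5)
Your proof is correct and follows essentially the same route as the paper: your key claim $r_{J_1}(A_1,\dots,A_n)\leq u\lceil r_{J_2}(A_1,\dots,A_n)/k\rceil$ is the paper's inequality $r_{J_1}(I_1^s,\dots,I_n^s)\leq r_{J_1}(J_2^p)\,r_{J_2^p}(I_1^s,\dots,I_n^s)$ in disguise (since $r_{J_2^k}(\cdots)=\lceil r_{J_2}(\cdots)/k\rceil$), and your $\liminf$ treatment of the error term $u/s$ reproduces what the paper delegates to Lemma \ref{Lpowers} and Remark \ref{RemLimitInf}. The only difference is packaging; the substance (reduction to $J_1^{u}\subseteq\overline{J_2^{k}}$ via Rees' theorem and quasi-unmixedness, then passage to the infimum over $s$ and over $(u,k)$) coincides.
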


\begin{proof}
By (\ref{laerresubJ}) we have that
$$
r_{J_1}(J_2)=\min\big\{r\geq 1: e(J_2)=e(J_2+J_1^r)\big\}.
$$
Given an integer $r\geq 1$, the condition $e(J_2)=e(J_2+J_1^r)$ is
equivalent to saying that
$J_1^r\subseteq \overline{J_2}$, by the Rees' multiplicity theorem
(see \cite[p.\ 222]{HunekeSwanson2006}).
Therefore, an elementary computation shows that
\begin{equation}\label{J2}
r_{J_1}(I_1,\dots, I_n) \leq  r_{J_1}(J_2)r_{J_2}(I_1,\dots, I_n).
\end{equation}
By the generality of the previous inequality, we have
\begin{equation}\label{J2p}
r_{J_1}(I_1^s,\dots, I_n^s) \leq
r_{J_1}(J_2^p)r_{J_2^p}(I_1^s,\dots, I_n^s),
\end{equation}
for all integers $p,s\geq 1$. The inequality (\ref{J2p}) shows that
\begin{align*}
\LL_{J_1}(I_1,\dots, I_n)&=
\inf_{s\geq 1}\frac{r_{J_1}(I_1^s,\dots,I_n^s)}{s}\leq
\inf_{s\geq 1}\frac{r_{J_1}(J_2^p)r_{J_2^p}(I_1^s,\dots, I_n^s)}{s}=\\
&=r_{J_1}(J_2^p)\LL_{J_2^p}(I_1,\dots, I_n)=
r_{J_1}(J_2^p)\frac{1}{p}\LL_{J_2}(I_1,\dots, I_n),
\end{align*}
for all integer $p\geq 1$, where the last equality comes from
Lemma \ref{Lpowers}. Then
$$
\LL_{J_1}(I_1,\dots, I_n)\leq \bigg(\inf_{p\geq 1}
\frac{r_{J_1}(J_2^p)}{p}\bigg)\LL_{J_2}(I_1,\dots,
I_n)=\LL_{J_1}(J_2)\LL_{J_2}(I_1,\dots, I_n).
$$
\end{proof}

\begin{prop}\label{uppers}\cite{Bivia2009} Let $(R,m)$ be a Noetherian
local ring of dimension $n$.
For each $i=1,\dots, n$ let us consider ideals $I_i$ and $J_i$
such that $I_i\subseteq J_i$. Let suppose that $\sigma(I_1,\dots,
I_n)<\infty$ and that $\sigma(I_1,\dots, I_n)=\sigma(J_1,\dots,
J_n)$. Then
\begin{equation}\label{monot}
\LL_0(I_1,\dots, I_n)\leqslant \LL_0(J_1,\dots, J_n).
\end{equation}
\end{prop}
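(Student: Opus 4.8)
The plan is to reduce the statement to a pointwise comparison of the functions $r(\cdot)$ evaluated on powers of the ideals. Since $\LL_0(I_1,\dots,I_n)=\inf_{s\geq 1}\frac{r(I_1^s,\dots,I_n^s)}{s}$ and similarly for the $J_i$, it is enough to establish
\begin{equation*}
r(I_1^s,\dots,I_n^s)\leq r(J_1^s,\dots,J_n^s)\qquad\text{for every }s\geq 1,
\end{equation*}
because then the $s$-th term of the first sequence is bounded above by the $s$-th term of the second, and hence so are the two infima. First I would note that the hypotheses are inherited by powers: one has $I_i^s\subseteq J_i^s$, and by the homogeneity relation $\sigma(I_1^s,\dots,I_n^s)=s^{n}\sigma(I_1,\dots,I_n)$ (\cite[Lemma 2.6]{Bivia2009}, already invoked in the proof of Lemma \ref{rpowers}), together with the hypothesis $\sigma(I_1,\dots,I_n)=\sigma(J_1,\dots,J_n)<\infty$, it follows that $\sigma(I_1^s,\dots,I_n^s)=\sigma(J_1^s,\dots,J_n^s)<\infty$. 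So it suffices to treat the case $s=1$, that is, to prove: if $I_i\subseteq J_i$ for all $i$ and $\sigma(I_1,\dots,I_n)=\sigma(J_1,\dots,J_n)<\infty$, then $r(I_1,\dots,I_n)\leq r(J_1,\dots,J_n)$; the general case then follows by applying this to the families $I_i^s\subseteq J_i^s$.

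To prove this reduced statement, I would set $r=r(J_1,\dots,J_n)$, so that $\sigma(J_1,\dots,J_n)=e(J_1+m^r,\dots,J_n+m^r)$, and aim to show that $e(I_1+m^r,\dots,I_n+m^r)=\sigma(I_1,\dots,I_n)$, which forces $r(I_1,\dots,I_n)\leq r$. The ideals $I_i+m^r$ and $J_i+m^r$ all contain $m^r$, hence are $m$-primary, so each of the mixed multiplicities in question is finite and equals the corresponding value of $\sigma$. Applying Lemma \ref{reverseincl} to the two $m$-primary families $\{I_i+m^r\}_i$ and $\{J_i+m^r\}_i$ (with the roles of the two families exchanged relative to the notation used in that lemma, where the smaller family is the one denoted by the $J$'s), and using $\sigma=e$ for $m$-primary ideals, gives the monotonicity
\begin{equation*}
e(I_1+m^r,\dots,I_n+m^r)\geq e(J_1+m^r,\dots,J_n+m^r)=\sigma(J_1,\dots,J_n)=\sigma(I_1,\dots,I_n).
\end{equation*}
The reverse inequality $e(I_1+m^r,\dots,I_n+m^r)\leq\sigma(I_1,\dots,I_n)$ is immediate, since $\sigma(I_1,\dots,I_n)$ is by definition the maximum of the numbers $e(I_1+m^t,\dots,I_n+m^t)$ over $t\in\Z_+$. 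Hence equality holds, $r$ lies in the set whose minimum defines $r(I_1,\dots,I_n)$, and $r(I_1,\dots,I_n)\leq r$, as wanted.

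Putting everything together, applying the reduced statement to $I_i^s\subseteq J_i^s$ yields $r(I_1^s,\dots,I_n^s)\leq r(J_1^s,\dots,J_n^s)$ for all $s\geq 1$, and dividing by $s$ and passing to the infimum gives $\LL_0(I_1,\dots,I_n)\leq\LL_0(J_1,\dots,J_n)$. The argument is largely formal, so I do not expect a serious obstacle; the one point that needs to be handled with some care is the monotonicity of mixed multiplicities of $m$-primary ideals under inclusion, and the slight subtlety there is to recognise that it is already contained in Lemma \ref{reverseincl} (the case where every ideal involved is $m$-primary, so that $\sigma$ reduces to $e$) — alternatively it can simply be quoted from \cite[\S 17]{HunekeSwanson2006}. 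Note that no hypothesis on the residue field is required.
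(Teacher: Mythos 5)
Your proof is correct. The paper states Proposition \ref{uppers} only as a citation from \cite{Bivia2009} without reproducing an argument, but your route --- termwise comparison of $r(I_1^s,\dots,I_n^s)$ with $r(J_1^s,\dots,J_n^s)$ via the monotonicity of mixed multiplicities of $m$-primary ideals (the $m$-primary case of Lemma \ref{reverseincl}) together with the homogeneity $\sigma(I_1^s,\dots,I_n^s)=s^n\sigma(I_1,\dots,I_n)$ --- is the natural one and is consistent with how the neighbouring lemmas in the paper are proved.
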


Let us denote the canonical basis in $\R^n$ by $e_1,\dots, e_n$.

\begin{prop}\label{intersecteixos}\cite{Bivia2005}
Let $J$ be an ideal of finite colength of $\O_n$ and set 
$r_i=\min\{r:re_i\in\Gamma_+(J)\}$, for all $i=1,\dots, n$.
Then
$$
\max\{r_1,\dots, r_n\}\leq \LL_0(J)
$$
and equality holds if $\overline J$ is a monomial ideal.
\end{prop}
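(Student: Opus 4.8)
The plan is to prove the two inequalities separately: that $\max\{r_1,\dots,r_n\}\le\LL_0(J)$ holds for every ideal $J\subsetneq\O_n$ of finite colength, and that the reverse inequality holds once $\overline J$ is assumed to be a monomial ideal.

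For the lower bound I would fix a generating system $g_1,\dots,g_s$ of $J$ and test the defining {\L}ojasiewicz inequality of $\LL_0(J)$ along each coordinate axis. The combinatorial point to record first is that, because $\Gamma_+(J)$ is the convex hull of $\Gamma_+(g_1)\cup\cdots\cup\Gamma_+(g_s)$, a point $re_i$ belongs to $\Gamma_+(J)$ precisely when some monomial $x_i^c$ with $c\le r$ occurs in one of the $g_k$; hence $r_i=\min\{c:x_i^c\text{ occurs in some }g_k\}$, and $r_i<\infty$ because finite colength forces $x_i^M\in J$ for some $M$ and therefore $Me_i\in\Gamma_+(J)$. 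Consequently each function $t\mapsto g_k(te_i)$ vanishes at $t=0$ to order at least $r_i$, so $|g_k(te_i)|\le C_k|t|^{r_i}$ for $|t|$ small. If $\alpha>0$, $C>0$ and a neighbourhood $U$ of $0$ satisfy $\|x\|^\alpha\le C\sup_k|g_k(x)|$ on $U$, then setting $x=te_i$ gives $|t|^\alpha\le C'|t|^{r_i}$ for $0<|t|\ll 1$, which forces $\alpha\ge r_i$; taking the infimum over admissible $\alpha$ and then the maximum over $i$ yields $\max_i r_i\le\LL_0(J)$.

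For the reverse inequality, assume $\overline J$ is monomial. By Lejeune--Teissier it suffices to show $\overline{\nu}_J(m)\ge 1/\rho$, where $\rho=\max_i r_i\in\Z_{\ge 1}$; since $\overline{\nu}_J(m)=\min_i\overline{\nu}_J(x_i)$ and, by the equivalence recalled in the introduction (with $p=1$, $q=\rho$), $\overline{\nu}_J(x_i)\ge 1/\rho$ is equivalent to $x_i^\rho\in\overline J$, it is enough to check $x_i^\rho\in\overline J$ for each $i$. Now an integrally closed monomial ideal coincides with the monomial ideal generated by all $x^k$ with $k\in\Gamma_+(\overline J)$; since $J\subseteq\overline J$ gives $\Gamma_+(J)\subseteq\Gamma_+(\overline J)$, and $r_ie_i\in\Gamma_+(J)$ together with $\rho\ge r_i$ gives $\rho e_i\in\Gamma_+(\overline J)$, we conclude $x_i^\rho\in\overline J$, as wanted. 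Combined with the lower bound, $\LL_0(J)=\max_i r_i$.

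The parts I expect to be most delicate are precisely the two structural inputs: the combinatorial identification of the trace of $\Gamma_+(J)$ on a coordinate axis with the pure powers of $x_i$ occurring in a generating system — this is what turns the analytic inequality into the numerical bound $\alpha\ge r_i$ — and the classical fact, used in the second half, that an integrally closed monomial ideal is recovered from its Newton polyhedron. Both are standard and short, so once they are in place the argument is essentially bookkeeping. Alternatively one could make the lower bound purely algebraic, invoking $\Gamma_+(\overline{J^q})=\Gamma_+(J^q)$ and the relation between $\overline{\nu}_J$ and the $\overline{J^q}$ in place of the axis restriction, which trades the combinatorial lemma for the equally standard statement that taking integral closure does not enlarge the Newton polyhedron.
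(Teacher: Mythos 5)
Your argument is correct. Note that the paper itself offers no proof of this proposition -- it is imported verbatim from the cited reference -- so there is nothing internal to compare against; but your route is the natural one and, in its second half, coincides with the standard argument in that reference. For the lower bound you test the defining inequality along the coordinate axes; the one point that genuinely needs the justification you give is that $re_i\in\Gamma_+(J)$ forces a pure monomial $x_i^c$, $c\le r$, to occur in some generator (this follows, as you indicate, because a convex combination of points of $\supp(g_k)+\R^n_+$ landing on the $i$-th axis can only involve points already on that axis, all coordinates being nonnegative), and hence $\ord_t\,g_k(te_i)\ge r_i$ for every generator. For the upper bound you reduce, via Lejeune--Teissier and the formula $\overline\nu_J(m)=\min_i\overline\nu_J(x_i)$, to showing $x_i^{\rho}\in\overline J$ with $\rho=\max_i r_i$, which follows from the description of an integrally closed monomial ideal by its Newton polyhedron together with $\Gamma_+(J)\subseteq\Gamma_+(\overline J)$. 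Both structural inputs are standard, and your alternative remark (using $\Gamma_+(\overline{J^q})=\Gamma_+(J^q)$ to make the lower bound algebraic) is exactly the flavour of the proof in the cited source; the axis-restriction version you chose is the more elementary of the two.
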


\section{Weighted homogeneous filtrations}

Let us fix a vector $w=(w_1,\dots, w_n)\in \Z^n_{\geq 1}$.  We will
usually refer to $w$ as the {\it vector of weights}.  Let $h\in\O_n$,
$h\neq 0$, the {\it degree of $h$ with respect to $w$}, or {\it
$w$-degree} of $h$, is defined as
$$
d_w(h)=\min\{\langle k,w\rangle: k\in\supp(h)\},
$$
where $\langle \,,\rangle$ stands for the usual scalar product. In
particular, if $x_1,\dots, x_n$ denotes a system of coordinates in
$\C^n$ and $x_1^{k_1}\dots x_n^{k_n}$ is a monomial in $\O_n$, then
$d_w(x_1^{k_1}\dots x_n^{k_n})=w_1k_1+\cdots +w_nk_n$. By convention,
we set $d_w(0)=+\infty$. If $h\in \O_n$ and $h=\sum_k a_kx^k$ is the
Taylor expansion of $h$ around the origin, then we define the {\it
principal part of $h$ with respect to $w$} as the polynomial given by
the sum of those terms $a_kx^k$ such that $\langle
k,w\rangle=d_w(h)$. We denote this polynomial by $p_w(h)$.

\begin{defn}\label{semiqh}
We say that a function $h\in\O_n$ is {\it weighted homogeneous of
degree $d$ with respect to $w$} if $\langle k,w\rangle=d$, for all
$k\in \supp(h)$.
The function $h$ is said to be {\it semi-weighted homogeneous of
degree $d$ with respect to $w$} when $p_w(h)$ has an isolated
singularity at the origin.
\end{defn}

It is well-known that, if $h$ is a
semi-weighted homogeneous function, then $h$ has an isolated
singularity at the origin and that $h$ and $p_w(h)$ have the same
Milnor number (see for instance \cite[\S 12]{ArnoldGuseinVarchenko1985}).
Let $g=(g_1,\dots, g_n):(\C^n,0)\to (\C^n,0)$ be an analytic map
germ, let us denote the map $(p_w(g_1),\dots, p_w(g_n))$ by $p_w(g)$.
The map $g$ is said to be {\it semi-weighted homogeneous with respect to
$w$} when $(p_w(g))^{-1}(0)=\{0\}$.

If $I$ is an ideal of $\O_n$, then we define the {\it degree of $I$
with respect to $w$} as
$$
d_w(I)=\min\{d_w(h):h \in I\}.
$$
If $g_1,\dots, g_r$ constitutes a generating system of $I$, then it
is straightforward to see that $d_w(I)=\min\{d_w(g_1),\dots,
d_w(g_r)\}$.

Let $r\in\Z_+$, then we denote by $\B_r$ the set of all $h\in\O_n$
such that $d_w(h)\geq r$ (therefore $0\in\B_r$). We observe that
\begin{enumerate}
\item[(a)] $\B_r$ is an integrally closed monomial ideal of finite
colength, for all $r\geq 1$;
\item[(b)] $\B_r\B_s\subseteq \B_{r+s}$, $r,s\geq 1$;
\item[(c)] $\B_0=\O_n$.
\end{enumerate}
The family of ideals $\{\B_r\}_{r\geq 1}$ is called the {\it weighted
homogeneous filtration induced by $w$}.
We denote by $\A_r$ the ideal of $\O_n$ generated by the monomials
$x^k$ such that $d_w(x^k)=r$. If there is not any monomial $x^k$ such
that $d_w(x^k)=r$ then we set $\A_r=0$. Given an integer $r\geq 1$,
we observe that $\A_r\subseteq \B_r$ and that $\overline {\A_r}\neq
\B_r$ in general. Moreover it follows easily that $\overline
{\A_r}=\B_r$ if and only if $\A_r$ is an ideal of finite colength of
$\O_n$.

If $r_1,\dots, r_n\in\Z_{\geq 1}$, then it is not true in general
that $\sigma(\A_{r_1},\dots, \A_{r_n})<\infty$, even if $\A_{r_i}\neq
0$, for all $i=1,\dots, n$. However $\sigma(\B_{r_1},\dots,
\B_{r_n})<\infty$, since $\B_{r_i}$ has finite colength, for all
$i=1,\dots, n$. For instance, let us consider the vector $w=(3,1)$. Then we have
$$
\A_4=\langle xy, y^4\rangle, \hspace{1cm} \A_5=\langle xy^2,
y^5\rangle.
$$
We observe that the ideal $\A_4+\A_5$ has not finite colength, therefore
$\sigma(\A_4,\A_5)$ is not finite (see Remark \ref{sigmafinita}).

\begin{prop}\label{Bezoutlike}
Let $r_1,\dots, r_n \in\Z_{\geq 1}$. If $\sigma(\A_{r_1},\dots,
\A_{r_n})<\infty$ then $\sigma(\B_{r_1},\dots, \B_{r_n})<\infty$ and
$$
\sigma(\A_{r_1},\dots,\A_{r_n})=\sigma(\B_{r_1},\dots,
\B_{r_n})=\frac{r_1\cdots r_n}{w_1\cdots w_n}.
$$
\end{prop}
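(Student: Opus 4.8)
The plan is to evaluate both Rees' mixed multiplicities through sufficiently general elements, to observe that these general elements are (semi\nobreakdash-)weighted homogeneous maps of prescribed $w$-degrees, and to pin down their multiplicity by a Bézout-type count. First I would settle the finiteness of $\sigma(\B_{r_1},\dots,\B_{r_n})$ and one inequality: since $\A_{r_i}\subseteq\B_{r_i}$ for every $i$ and $\sigma(\A_{r_1},\dots,\A_{r_n})<\infty$ by hypothesis, Lemma \ref{reverseincl} gives at once $\sigma(\B_{r_1},\dots,\B_{r_n})<\infty$ and $\sigma(\A_{r_1},\dots,\A_{r_n})\geq\sigma(\B_{r_1},\dots,\B_{r_n})$. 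Moreover each $\A_{r_i}\neq 0$, for otherwise no element of $\A_{r_1}\oplus\cdots\oplus\A_{r_n}$ generates an ideal of finite colength and $\sigma(\A_{r_1},\dots,\A_{r_n})=\infty$ by Proposition \ref{sigmaexists}; and since each $\B_{r_i}$ has finite colength, $\sigma(\B_{r_1},\dots,\B_{r_n})$ is the Teissier--Risler mixed multiplicity $e(\B_{r_1},\dots,\B_{r_n})$.

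Next I would compute $\sigma(\A_{r_1},\dots,\A_{r_n})$. By Proposition \ref{sigmaexists} it equals $\dim_\C\O_n/\langle g_1,\dots,g_n\rangle$ for a sufficiently general $(g_1,\dots,g_n)\in\A_{r_1}\oplus\cdots\oplus\A_{r_n}$. Since $\A_{r_i}$ is generated by monomials of $w$-degree exactly $r_i$, each such $g_i$ is a nonzero weighted homogeneous polynomial of degree $r_i$, and $g=(g_1,\dots,g_n)$ satisfies $g^{-1}(0)=\{0\}$. Then I invoke the Bézout count for a weighted homogeneous map with isolated zero: $g_1,\dots,g_n$ form a homogeneous system of parameters of $\C[x_1,\dots,x_n]$ graded by $w$, hence a regular sequence, so $\C[x_1,\dots,x_n]/\langle g_1,\dots,g_n\rangle$ is an Artinian graded complete intersection with Hilbert series $\prod_{i=1}^n\frac{1-t^{r_i}}{1-t^{w_i}}$; evaluating this rational function at $t=1$ yields $\dim_\C\O_n/\langle g_1,\dots,g_n\rangle=\frac{r_1\cdots r_n}{w_1\cdots w_n}$. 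Hence $\sigma(\A_{r_1},\dots,\A_{r_n})=\frac{r_1\cdots r_n}{w_1\cdots w_n}$.

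For $\sigma(\B_{r_1},\dots,\B_{r_n})=e(\B_{r_1},\dots,\B_{r_n})$ I would use the Newton-polyhedron description of mixed multiplicities of monomial ideals: this number is the mixed volume, normalized so that its value on the diagonal is $n!$ times the Euclidean volume, of the bounded regions $\R^n_{\geq0}\setminus\Gamma_+(\B_{r_i})$. Each such region is the simplex $\{x\in\R^n_{\geq0}:\langle x,w\rangle\leq r_i\}=r_i\cdot S$, where $S=\{x\in\R^n_{\geq0}:\langle x,w\rangle\leq1\}$ has volume $1/(n!\,w_1\cdots w_n)$; by homogeneity of the mixed volume this equals $r_1\cdots r_n\cdot n!\,\mathrm{vol}(S)=\frac{r_1\cdots r_n}{w_1\cdots w_n}$. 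Alternatively, staying inside the paper's framework, one writes $\sigma(\B_{r_1},\dots,\B_{r_n})=\dim_\C\O_n/\langle h_1,\dots,h_n\rangle$ for sufficiently general $h_i\in\B_{r_i}$, observes that then $d_w(h_i)=r_i$ and $p_w(h_i)$ is a sufficiently general element of $\A_{r_i}$, so that $h=(h_1,\dots,h_n)$ is a semi-weighted homogeneous map with $(p_w(h))^{-1}(0)=\{0\}$, and uses the invariance of the multiplicity under passage to the weighted-homogeneous principal part to fall back on the previous paragraph. Either way the three numbers coincide, which proves the proposition.

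The main obstacle is the pair of classical inputs feeding the last two paragraphs: the weighted Bézout count $\dim_\C\O_n/\langle g_1,\dots,g_n\rangle=\prod_i r_i/w_i$ and, for the second route, the identity $\dim_\C\O_n/\langle h\rangle=\dim_\C\O_n/\langle p_w(h)\rangle$ for a semi-weighted homogeneous map. The former is routine once one knows $g$ is a regular sequence (which follows from $g^{-1}(0)=\{0\}$ by the graded unmixedness theorem) and then reduces to the Hilbert-series evaluation above; the latter is the map analogue of the fact that a semi-weighted homogeneous function has the same Milnor number as its principal part, and can be obtained from upper semicontinuity of $\dim_\C\O_n/\langle\cdot\rangle$ along the deformation $t\mapsto p_w(h)+t\bigl(h-p_w(h)\bigr)$ combined with the weighted Bézout bound. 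Choosing the mixed-volume computation for $\sigma(\B_{r_1},\dots,\B_{r_n})$ avoids this second input entirely, at the cost of quoting the Newton-polyhedron formula for mixed multiplicities of monomial ideals.
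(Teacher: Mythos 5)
Your computation of $\sigma(\A_{r_1},\dots,\A_{r_n})$ and your second route for $\sigma(\B_{r_1},\dots,\B_{r_n})$ together reproduce the paper's proof: the paper takes a sufficiently general $(h_1,\dots,h_n)\in\B_{r_1}\oplus\cdots\oplus\B_{r_n}$, observes that $(p_w(h_1),\dots,p_w(h_n))$ is a sufficiently general element of $\A_{r_1}\oplus\cdots\oplus\A_{r_n}$, so that $h$ is a semi-weighted homogeneous map, and then quotes the weighted B\'ezout count $e(h)=e(p_w(h))=r_1\cdots r_n/(w_1\cdots w_n)$ from Arnold--Gusein-Zade--Varchenko and Brian\c{c}on--Maynadier. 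Your Hilbert-series derivation of that count (h.s.o.p.\ in the $w$-graded polynomial ring, hence regular sequence, Hilbert series $\prod_i(1-t^{r_i})/(1-t^{w_i})$ evaluated at $t=1$) is a sound, self-contained substitute for that citation, and your preliminary appeal to Lemma \ref{reverseincl} is correct, if not strictly needed once both numbers are computed.

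The mixed-volume route that you recommend at the end, however, contains a genuine error: $\R^n_{\geq 0}\setminus\Gamma_+(\B_{r_i})$ is \emph{not} the simplex $\{x\in\R^n_{\geq0}:\langle x,w\rangle\leq r_i\}$ unless $w_j$ divides $r_i$ for every $j$. The polyhedron $\Gamma_+(\B_{r_i})$ is generated by the \emph{lattice} points $k$ with $\langle k,w\rangle\geq r_i$, and its vertex on the $j$-th coordinate axis is $\lceil r_i/w_j\rceil e_j$, which lies strictly above $(r_i/w_j)e_j$ whenever $w_j\nmid r_i$. Concretely, for $w=(1,2)$ and $r_i=3$ the region $\R^2_{\geq0}\setminus\Gamma_+(\B_3)$ is the quadrilateral with vertices $(0,0)$, $(3,0)$, $(1,1)$, $(0,2)$, of area $5/2$, whereas your simplex $3S$ has area $9/4$; so the individual covolumes you feed into the mixed-volume computation are wrong. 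The conclusion can still be rescued along these lines, but only by arguing with two inequalities rather than an identification of regions: since $\Gamma_+(\B_{r_i})\subseteq\{\langle x,w\rangle\geq r_i\}$, monotonicity of the mixed covolume gives $e(\B_{r_1},\dots,\B_{r_n})\geq r_1\cdots r_n/(w_1\cdots w_n)$, and the reverse inequality is exactly your Lemma \ref{reverseincl} step combined with the value of $\sigma(\A_{r_1},\dots,\A_{r_n})$. Note also that the covolume formula for mixed multiplicities of monomial ideals is an external input not used anywhere in the paper, so this repaired route is not obviously cheaper than the semicontinuity argument it was meant to avoid.
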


\begin{proof}
By Proposition \ref{sigmaexists}, there exists a sufficiently
general element $(h_1,\dots, h_n)\in \B_{r_1}\oplus \cdots \oplus
\B_{r_n}$ such that
\begin{equation}\label{Bh}
\sigma(\B_{r_1},\dots, \B_{r_n})=e(h_1,\dots, h_n).
\end{equation}

The condition $\sigma(\A_{r_1},\dots, \A_{r_n})<\infty$ implies that
$\A_{r_i}\neq 0$, for all $i=1,\dots, n$. The ideal $\A_{r_i}$ is
generated by the monomials of $w$-degree $r_i$, for all $i=1,\dots,
n$, then $h_i$ can be written as $h_i=g_i+g'_i$, for all
$i=1\dots,n$, where $(g_1,\dots, g_n)$ is a sufficiently general
element of $\A_{r_1}\oplus \cdots \oplus \A_{r_n}$ and $g'_i\in \O_n$
verifies that $d_w(g'_i)>r_i$, for all $i=1,\dots, n$. Therefore
$p_w(h_i)=g_i$, for all $i=1,\dots, n$.

Let $g$ denote the map $(g_1,\dots, g_n):(\C^n,0)\to (\C^n,0)$.  The
condition $\sigma(\A_{r_1},\dots, \A_{r_n})<\infty$ and the genericity
of $g$ imply that $g$ is finite, that is, $g^{-1}(0)=\{0\}$ and
$\sigma(\A_{r_1},\dots, \A_{r_n})=e(g_1,\dots, g_n)$.  Consequently
the map $h:(\C^n,0)\to (\C^n,0)$ is semi-weighted homogeneous with
respect to $w$.  By \cite[\S 12]{ArnoldGuseinVarchenko1985}
(see also \cite{BrianconMaynadier2002} for a more general phenomenon),
this implies that
$$
e(h_1,\dots, h_n)=e(g_1,\dots, g_n)=\frac{r_1\cdots r_n}{w_1\cdots
w_n}.
$$
Then the result follows.
\end{proof}

\begin{defn}\label{defwmatching}
Let $J_1,\dots, J_n$ be a family of ideals of $\O_n$ and let
$r_i=d_w(J_i)$, for all $i=1,\dots, n$.  We say that $J_1,\dots, J_n$
\textit{admits a $w$-matching} if there exists a permutation $\tau$
of $\{1,\ldots,n\}$ and an index $i_{0}\in\{1,\ldots,n\}$ such that
\begin{enumerate}
    \item[(a)]  $w_{i_{0}}=\min\{w_{1},\ldots,w_{n}\}$,

    \item[(b)]  $r_{\tau(i_{0})}=\max\{r_{1},\ldots,r_{n}\}$ and

    \item[(c)]  the pure monomial $x_{i}^{r_{\tau(i)}/w_i}$
    belongs to $J_{\tau(i)}$, for
    all $i\neq i_{0}$.
\end{enumerate}
\end{defn}

\begin{rem}\label{casosparticulares}
It is clear from the previous definition that if $r_1,\dots,
r_n\in\Z_{\geq 1}$ and $\A_{r_i}$ has finite colength, for all
$i=1,\dots, n$,
then $\A_{r_1},\dots, \A_{r_n}$ admits an $w$-matching.
If $r\in \Z_{\geq 1}$ then we observe that
$\A_r$ has finite colength if and only if $w_i$ divides $r$, for all
$i=1,\dots, n$.

Let us consider the case $n=2$ of the previous definition. Therefore,
let $r_1, r_2\in\Z_{\geq 1}$ such that $r_1>r_2$
and let us suppose that $w_1<w_2$. Let $J_1, J_2$ be ideals of $\O_2$
such that $d_w(J_i)=r_i$, $i=1,2$. Then $J_1,J_2$
admits an $w$-matching if and only if $y^{r_2/w_2}\in J_2$.

It is straightforward to observe that, under the conditions of the
previous definition, if $J_i$ contains the pure monomial
$x_i^{d_w(J_i)}$, for all $i=1,\dots, n$, then automatically
$J_1,\dots, J_n$ admits a $w$-matching.
\end{rem}

\begin{thm}\label{main}
Let $r_1,\dots, r_n \in\Z_{\geq 1}$ such that $\sigma(\A_{r_1},\dots,
\A_{r_n})<\infty$. Let $J_1,\dots, J_n$ be a
set of ideals of $\O_n$ such that $d_w(J_i)=r_i$, for all
$i=1,\dots, n$, and $\sigma(J_1,\dots, J_n)=\sigma(\A_{r_1},\dots,
\A_{r_n})$. Then
\begin{equation}\label{central}
\LL_0(J_1,\dots, J_n)\leq \LL_0(\A_{r_1},\dots,
\A_{r_n})\leq \LL_0(\B_{r_1},\dots, \B_{r_n}) \leq \frac{\max\{r_1,\dots,
r_n\}}{\min\{w_1,\dots, w_n\}}.
\end{equation}
and the above inequalities turn into equalities if $J_1,\dots, J_n$ admit an $w$-matching.
\end{thm}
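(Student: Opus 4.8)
The plan is to establish the chain of inequalities in (\ref{central}) first, and then show that under the $w$-matching hypothesis the outer terms coincide, forcing all intermediate inequalities to be equalities. The first inequality $\LL_0(J_1,\dots,J_n)\le\LL_0(\A_{r_1},\dots,\A_{r_n})$ follows immediately from Proposition \ref{uppers}: we have $\A_{r_i}\subseteq\B_{r_i}$ and $J_i\subseteq\B_{r_i}$, but more to the point we want to compare $J_i$ with $\A_{r_i}$; since $d_w(J_i)=r_i$ we have $J_i\subseteq\B_{r_i}=\overline{\A_{r_i}}$, hence one can interpolate an ideal and use monotonicity under the equimultiplicity hypothesis $\sigma(J_1,\dots,J_n)=\sigma(\A_{r_1},\dots,\A_{r_n})$ (which equals $\sigma(\B_{r_1},\dots,\B_{r_n})$ by Proposition \ref{Bezoutlike}). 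The same argument, applied to $\A_{r_i}\subseteq\B_{r_i}$ with equal $\sigma$ by Proposition \ref{Bezoutlike}, gives the second inequality. So the real content of the upper bound is the third inequality $\LL_0(\B_{r_1},\dots,\B_{r_n})\le\frac{\max_i r_i}{\min_j w_j}$.

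For the third inequality I would argue directly from the definition $\LL_0(\B_{r_1},\dots,\B_{r_n})=\inf_{s\ge1}\frac{r(\B_{r_1}^s,\dots,\B_{r_n}^s)}{s}$. Fix the indices realizing the max and min: say $r_{i_0}=\max_i r_i$ and $w_{j_0}=\min_j w_j$. It suffices to bound $r(\B_{r_1},\dots,\B_{r_n})$, i.e. to exhibit an integer $r$ with $\sigma(\B_{r_1},\dots,\B_{r_n})=e(\B_{r_1}+m^r,\dots,\B_{r_n}+m^r)$, and the natural candidate is $r=\lceil r_{i_0}/w_{j_0}\rceil$ or rather we should work with powers to avoid ceilings and get the sharp bound. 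The key observation is that $m^{r}\subseteq\B_{w_{j_0}r}$ (since any monomial of total degree $r$ has $w$-degree at least $w_{j_0}r$), so if $w_{j_0}r\ge r_i$ for all $i$, then $\B_{r_i}+m^r=\B_{r_i}$, whence $e(\B_{r_1}+m^r,\dots)=\sigma(\B_{r_1},\dots,\B_{r_n})$. Taking $s$-th powers: $\B_{r_i}^s\supseteq\B_{sr_i}$ wait — we need the reverse containment direction, so one uses $\B_{r_i}^s\subseteq\B_{sr_i}$ and the fact that all these have the same $\sigma$ by Proposition \ref{Bezoutlike} together with Lemma \ref{reverseincl}; choosing $s$ and the comparison integer so that $\frac{r}{s}\to\frac{\max_i r_i}{\min_j w_j}$ yields the bound. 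I expect the careful bookkeeping here — matching $s$ against $r$ to make the ratio converge to exactly $\frac{\max r_i}{\min w_j}$ and justifying the equimultiplicity at each stage — to be the main obstacle in the upper-bound half.

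For the equality under a $w$-matching, the plan is to produce the matching lower bound $\LL_0(J_1,\dots,J_n)\ge\frac{\max_i r_i}{\min_j w_j}$, since this then squeezes the whole chain. After relabeling by the permutation $\tau$ we may assume $x_i^{r_i/w_i}\in J_i$ for all $i\neq i_0$, that $w_{i_0}=\min_j w_j$, and that $r_{i_0}=\max_i r_i$. I would invoke Theorem \ref{base2} / Theorem \ref{base} to pass to a generic $g=(g_1,\dots,g_n)\in\mathcal S_0(J_1,\dots,J_n)$, so that $\LL_0(J_1,\dots,J_n)=\LL_0(g)$, and then bound $\LL_0(g)$ below by the classical curve criterion: it suffices to find an analytic arc along which $\|g(\varphi(t))\|$ vanishes to high order relative to $\|\varphi(t)\|$. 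The arc $\varphi(t)=(0,\dots,0,t,0,\dots,0)$ with $t$ in the $i_0$-th slot is the natural choice: for $i\neq i_0$, $g_i\in J_i\subseteq\B_{r_i}$ and $g_i$ — being generic with $\Gamma_+(g_i)=\Gamma_+(J_i)$ and $J_i$ containing $x_i^{r_i/w_i}$ — has the property that the $x_{i_0}$-axis is not in $\Gamma_+(g_i)$ unless forced, so $g_i(\varphi(t))$ vanishes to order $\ge$ something large; meanwhile $g_{i_0}\in J_{i_0}\subseteq\B_{r_{i_0}}$ vanishes to order $\ge r_{i_0}/w_{i_0}$ along this arc, while $\|\varphi(t)\|\sim|t|$. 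This gives $\LL_0(g)\ge r_{i_0}/w_{i_0}=\frac{\max_i r_i}{\min_j w_j}$. Alternatively, and perhaps more cleanly, one can use Proposition \ref{intersecteixos}: compute $r_{i_0}(J)$ for a suitable monomial ideal $J$ sandwiched between the $\A$'s, noting that $\B_{r_{i_0}}$ is a monomial ideal whose integral closure contains $x_{i_0}^{\,r_{i_0}/w_{i_0}}$ and no smaller power, giving $\LL_0(\B_{r_1},\dots,\B_{r_n})\ge r_{i_0}/w_{i_0}$ directly from the axis-intersection estimate. The delicate point — the main obstacle in this half — is verifying that the $w$-matching condition (c), the containment of the pure monomials $x_i^{r_i/w_i}\in J_i$, is exactly what is needed to rule out the generic $g_i$ ($i\neq i_0$) having an unexpectedly low-order term along the $x_{i_0}$-axis, i.e. that it forces the lower bound to be attained rather than merely bounded below by a smaller quantity; this is where conditions (a) and (b) pinning down which weight is minimal and which degree is maximal get used.
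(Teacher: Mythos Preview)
Your plan for the chain of inequalities is essentially right, and your direct route to the third inequality via $m^r\subseteq\B_{qr}$ (with $q=\min_j w_j$) can be made to work once you sandwich $\B_{r_i}^s+m^r$ between $\B_{r_i}^s$ and $\B_{sr_i}$ and use that these have equal $\sigma$. The paper takes a different path here: it introduces the diagonal ideal $J=\langle x_i^{\bar w/w_i}\rangle$, computes $r_J(\B_{r_1}^s,\dots,\B_{r_n}^s)=\lceil s\max_i r_i/\bar w\rceil$ exactly, and then combines $\LL_J(\B_{r_1},\dots,\B_{r_n})\le \max_i r_i/\bar w$ with $\LL_0(J)=\bar w/q$ via Lemma~\ref{transit}. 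Your approach is more elementary; the paper's has the virtue of an explicit closed formula at the intermediate step.

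The lower bound under the $w$-matching is where your proposal breaks down. First, Theorems~\ref{base} and~\ref{base2} require the ideals to be \emph{monomial}, which the $J_i$ are not assumed to be, so you cannot identify $\LL_0(J_1,\dots,J_n)$ with $\LL_0(g)$ for a generic $g$. More fundamentally, even in the monomial case your arc along the $x_{i_0}$-axis does not give the bound: the $w$-matching hypothesis says nothing about whether $J_{\tau(i)}$, for $i\neq i_0$, contains low-order pure powers of $x_{i_0}$. Take $n=2$, $w=(1,2)$, $J_1=\langle x_1^4\rangle$, $J_2=\langle x_1^2,x_2\rangle$; then $d_w(J_1)=4$, $d_w(J_2)=2$, $\sigma(J_1,J_2)=4=\sigma(\A_4,\A_2)$, and the pair admits a $w$-matching with $i_0=1$. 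A generic $g_2=bx_1^2+cx_2$ restricts to order $2$ on the $x_1$-axis, so that arc yields only $\LL_0(g)\geq 2$, whereas the theorem asserts $\LL_0(J_1,J_2)=4$. Your alternative via Proposition~\ref{intersecteixos} is also inapplicable: that result concerns $\LL_0$ of a single finite-colength ideal, not of an $n$-tuple, and a lower bound on $\LL_0(\B_{r_1},\dots,\B_{r_n})$ would in any case not transfer down to $\LL_0(J_1,\dots,J_n)$.

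The paper's lower bound avoids arcs entirely and works straight from the definition. With $p=\max_i r_i$ and $q=\min_j w_j$, it shows $r(J_1^{sq},\dots,J_n^{sq})>sp-1$ for every $s\geq 1$ by exhibiting the monomial ideal
\[
H=\bigl\langle x_i^{\,r_{\tau(i)}sq/w_i}:i\neq i_0\bigr\rangle+\bigl\langle x_{i_0}^{\,sp-1}\bigr\rangle,
\]
each of whose generators lies in the corresponding $J_{\tau(i)}^{sq}+m^{sp-1}$ precisely because of condition~(c) in the $w$-matching. Lemma~\ref{reverseincl} then gives $e(H)\geq e(J_1^{sq}+m^{sp-1},\dots,J_n^{sq}+m^{sp-1})$, and a direct computation of $e(H)$ shows it is strictly smaller than $\sigma(J_1^{sq},\dots,J_n^{sq})=(sq)^n\prod r_i/\prod w_j$; this is exactly where conditions~(a) and~(b) are used.
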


\begin{proof}
The condition $\sigma(J_1,\dots, J_n)=\sigma(\A_{r_1},\dots,
\A_{r_n})$
and Proposition \ref{Bezoutlike} imply that
$$
\LL_0(J_1,\dots, J_n)\leq\LL_0(\A_{r_1},\dots, \A_{r_n})\leq
\LL_0(\B_{r_1},\dots, \B_{r_n}),
$$
by virtue of Proposition \ref{uppers}. Let us denote
$\max\{r_1,\dots, r_n\}$ and $\min\{w_1,\dots, w_n\}$ by $p$ and $q$,
respectively.
Let us see that $\LL_0(\B_{r_1},\dots, \B_{r_n})\leq\frac pq$.

Let us denote by $\overline w$ the product $w_1\cdots w_n$ and let us
consider the ideal $
J=\langle x_1^{\alpha_1},\dots,x_n^{\alpha_n}\rangle
$,
where $\alpha_i=\frac{\w}{w_i}$, for all $i=1,\dots, n$. Since
$\sigma(\B_{r_1},\dots, \B_{r_n})<\infty$, it makes sense to compute
the number $r_J(\B^s_{r_1},\dots, \B^s_{r_n})$, for all $s\geq 1$:
\begin{align*}
r_J(\B^s_{r_1},\dots, \B^s_{r_n})&=\min\left\{r\geq 1:
\sigma(\B^s_{r_1},\dots, \B^s_{r_n})=\sigma(\B^s_{r_1}+J^r,\dots,
\B^s_{r_n}+J^r)\right\}\\
&=\min\left\{r\geq 1: \frac{sr_1\cdots sr_n}{\w}=\frac{\min\{sr_1, \w
r\}\cdots \min\{sr_n, \w r\}}{\w}\right\}\\
&=\min\big\{r\geq 1: \w r\geq \max\{sr_1,\dots, sr_n\}\big\}\\
&=\min\left\{r\geq 1: r\geq \frac{\max\{sr_1,\dots,
sr_n\}}{\w}\right\}
=\left\lceil\frac{\max\{sr_1,\dots, sr_n\}}{\w}\right\rceil,
\end{align*}
where $\lceil a\rceil$ denotes the least integer greater than or
equal to $a$, for any $a\in \R$.
Therefore
\begin{align*}
\LL_J(\B_{r_1},\dots, \B_{r_n})&=
\inf_{s\geq 1}\frac{r_J(\B^s_{r_1},\dots, \B^s_{r_n})}{s}
\leq
\inf_{a\geq 1}\frac{r_J(\B^{a\w}_{r_1},\dots,\B^{a\w}_{r_n})}{a\w}\\
&=\inf_{a\geq 1}\frac{1}{a\w}
\left\lceil\frac{\max\{a\w r_1,\dots,a\w r_n\}}{\w}\right\rceil
=\frac{\max\{r_1,\dots, r_n\}}{\w}.
\end{align*}

Moreover, by Proposition \ref{intersecteixos} we have
$$
\LL_0(J)=\max\{\alpha_1,\dots, \alpha_n\}=\frac{\w}{\min\{w_1,\dots,
w_n\}},
$$
since $J$ is a monomial ideal.
Therefore, by Lemma \ref{transit} we obtain
\begin{align*}
\LL_0(\B_{r_1},\dots, \B_{r_n})&\leq
\LL_0(J)\LL_J(\B_{r_1},\dots,\B_{r_n}) \\
&\leq\frac{\w}{\min\{w_1,\dots, w_n\}}\frac{\max\{r_1,\dots,
r_n\}}{\w}
=\frac{\max\{r_1,\dots, r_n\}}{\min\{w_1,\dots, w_n\}}.
\end{align*}

Let us prove that $\LL_0(J_{1},\dots, J_{n})\geq \frac{p}{q}$ suposing
that $J_1,\dots, J_n$ admit an $w$-matching.  This inequality holds if
and only if
$$
\frac{r(J_1^s,\dots, J_n^s)}{s}\geq \frac pq,
$$
for all $s\geq 1$. By Lemma \ref{rpowers} we have that
$qr(J_1^s,\dots, J_n^s)\geq r(J_1^{sq},\dots, J_n^{sq})$, for all
$s\geq 1$. Therefore it suffices to show that
\begin{equation}\label{suficient1}
r(J_1^{sq},\dots, J_n^{sq})> sp-1,
\end{equation}
for all $s\geq 1$. Let us fix an integer $s\geq 1$, then relation
(\ref{suficient1}) is equivalent to saying that
\begin{equation}\label{suficient2}
\sigma(J_1^{sq},\dots, J_n^{sq})> e(J_1^{sq}+m^{sp-1}, \dots,
J_1^{sq}+m^{sp-1}).
\end{equation}

Since $J_1,\dots, J_n$ admits an $w$-matching,
let us consider a permutation $\tau$ of $\{1,\ldots,n\}$ such that
\begin{enumerate}
    \item[(a)]   $w_{i_{0}}=\min\{w_{1},\ldots,w_{n}\}$,

    \item[(b)]   $r_{\tau(i_{0})}=\max\{r_{1},\ldots,r_{n}\}$ and

    \item[(c)]   the pure monomial $x_{i}^{r_{\tau(i)}/w_i}$
    belongs to $J_{\tau(i)}$ for
    all $i\neq i_{0}$.
\end{enumerate}
Let us define the ideal
$$
H=\left\langle x_i^{\frac{r_{\tau(i)}sq}{w_i}}: i\neq i_0
\right\rangle +\left\langle x_{i_0}^{sp-1}\right\rangle.
$$

Let $i\in\{1,\dots,n\}$, $i\neq i_0$. Since
$x_i^{\frac{r_{\tau(i)}}{w_i}}\in J_{\tau(i)}$, for all $i\neq i_0$,
we have
\begin{equation}\label{laH}
e(H)\geq e(J_{\tau(1)}^{sq}+m^{sp-1},\dots,J_{\tau(n)}^{sq}+m^{sp-1})=
e(J_1^{sq}+m^{sp-1},\dots,J_n^{sq}+m^{sp-1}).
\end{equation}
Hence, if we prove that $\sigma(J^{sq}_1,\dots, J^{sq}_n)> e(H)$ then
the result follows.

We observe that
\begin{equation}\label{sigmaH}
\sigma(J^{sq}_1,\dots, J^{sq}_n)=(sq)^n\frac{r_1\cdots r_n}{w_1\cdots
w_n}\,, \hspace{1cm} e(H)=(sq)^{n-1}\frac{r_1\cdots
r_n}{r_{\tau(i_0)}}\frac{w_{i_0}}{w_1\cdots w_n}(sp-1).
\end{equation}

Thus, since we assume that $r_{\tau(i_0)}=p$ and $w_{i_0}=q$, we have that
$\sigma(J^{sq}_1,\dots, J^{sq}_n)>e(H)$ if and only if
$$
sq>\frac{q}{p}(sp-1),
$$
which is to say that $spq>spq-q$. Therefore relation
(\ref{suficient2}) holds for all integer $s\geq 1$ and consequently
the inequality $\LL_0(J_{r_1},\dots, J_{r_n})\geq \frac pq$ follows.
Thus relation (\ref{central}) is proven.
\end{proof}

\begin{rem}
We observe that the condition that $J_1,\dots, J_n$ admits an
$w$-matching
can not be removed from the hypothesis of the previous theorem.
Let us consider now the weighted homogeneous filtration in $\O_2$
induced by the vector of weights
$w=(1,4)$ and let $J_1,J_2$ be the ideals of $\O_2$ given by
$J_1=\langle x^4\rangle$, $J_2=\langle y^2\rangle$.
We observe that $d_w(x^4)=4$, $d_w(y^2)=8$ and consequently the right
hand side of (\ref{central}) would lead
to the conclusion that $\LL_0(J_1,J_2)=8$, which is not the case,
since clearly $\LL_0(x^4, y^2)=4$. We also observe
that the system of ideals $J_1, J_2$ does not admit an $w$-matching (see
Remark \ref{casosparticulares}).
\end{rem}

In order to simplify the exposition, we need to introduce the following definition.

\begin{defn}
If $f\in\O_n$, $f(0)=0$, then $f$ is termed {\it convenient} when
$\Gamma_+(f)$ intersects each coordinate axis.  Let $J_i$ denote the
ideal of $\O_n$ generated by all monomials $x^k$ such that
$k\in\Gamma_+(\partial f/\partial x_i)$, $i=1,\dots, n$.  Let us fix a
vector of weights $w\in\Z^n_{\geq 1}$.  Then we say that $f$ {\it
admits a $w$-matching} when the family of ideals $J_1,\dots, J_n$
admits a $w$-matching (see Definition \ref{defwmatching}).
\end{defn}

It is easy to observe that if a function $f\in\O_n$ is convenient and
quasi-homogeneous, then $f$ admits a $w$-matching (see Remark
\ref{casosparticulares}).

Let us fix a vector of weights $w=(w_1,\dots, w_n)\in\Z_{\geq 1}^n$
and an integer $d\geq 1$. Then we denote
by $\O(w;d)$ the set of all functions $f\in\O_n$ such that $f$ is
semi-weighted homogeneous with respect to $w$ of degree $d$.

\begin{rem}\label{semiwh}
From Definition \ref{defwmatching} we observe that a function
$f\in\O(w;d)$ admits a $w$-matching if and only if $p_w(f)$ admits a
$w$-matching.
\end{rem}

\begin{cor}\label{CorMain}
Let $f:(\C^n,0)\to (\C,0)$ be a semi-weighted homogeneous function of
degree $d$ with respect to the
weights $w_1,\dots, w_n$. Then
\begin{equation}\label{gradient}
\LL_0(f)\leq \frac{d-\min\{w_1,\dots,w_n\}}{\min\{w_1,\dots,w_n\}}
\end{equation}
and equality holds if $f$ admits an $w$-matching.
\end{cor}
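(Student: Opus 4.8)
The plan is to reduce the statement about the gradient ideal $J(f)=\langle\partial f/\partial x_1,\dots,\partial f/\partial x_n\rangle$ to an application of Theorem~\ref{main} to a suitable family of monomial ideals attached to the partial derivatives. First I would fix $w=(w_1,\dots,w_n)$ and set $q=\min\{w_1,\dots,w_n\}$. Since $f$ is semi-weighted homogeneous of degree $d$, each partial derivative $\partial f/\partial x_i$ has a well-defined $w$-degree; more precisely, if $p_w(f)$ is the weighted homogeneous principal part (of degree $d$, with an isolated singularity at $0$), then $p_w(\partial f/\partial x_i)=\partial p_w(f)/\partial x_i$ has $w$-degree exactly $d-w_i$, because differentiating a monomial $x^k$ of $w$-degree $d$ in the variable $x_i$ produces a monomial of $w$-degree $d-w_i$ (times a nonzero constant, using that $p_w(f)$ has isolated singularity so the relevant coefficients survive). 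Hence setting $r_i=d-w_i=d_w(\partial f/\partial x_i)$, the maximum $\max\{r_1,\dots,r_n\}=d-q$, achieved at the index $i_0$ where $w_{i_0}=q$.

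Next I would introduce, for each $i$, the monomial ideal $J_i$ generated by the monomials $x^k$ with $k\in\Gamma_+(\partial f/\partial x_i)$, so that $d_w(J_i)=r_i$ and $\partial f/\partial x_i\in J_i$, giving $J(f)\subseteq J_1+\cdots+J_n$ at the level of Newton data. The key auxiliary fact is that $\sigma(\A_{r_1},\dots,\A_{r_n})<\infty$ and $\sigma(J(f)\text{-data})$ matches it: because $p_w(f)$ has an isolated singularity, the leading parts $p_w(\partial f/\partial x_i)$ form a regular sequence germ, i.e. $(p_w(\partial f/\partial x_1),\dots,p_w(\partial f/\partial x_n))^{-1}(0)=\{0\}$, so $g=(\partial f/\partial x_1,\dots,\partial f/\partial x_n)$ is semi-weighted homogeneous as a map. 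Choosing a sufficiently general element of $\A_{r_1}\oplus\cdots\oplus\A_{r_n}$ and arguing as in Proposition~\ref{Bezoutlike}, one gets $\sigma(\A_{r_1},\dots,\A_{r_n})=\prod r_i/\prod w_i<\infty$, and by Lemma~\ref{reverseincl} together with the Bézout-type bound this common value also equals $\sigma(J_1,\dots,J_n)$, hence $\sigma$ of the ideals $\langle\partial f/\partial x_i\rangle$ as well. This places us exactly in the hypotheses of Theorem~\ref{main}.

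Then I would invoke Theorem~\ref{main}: it yields
\begin{equation*}
\LL_0(\langle\partial f/\partial x_1\rangle,\dots,\langle\partial f/\partial x_n\rangle)\leq
\LL_0(\A_{r_1},\dots,\A_{r_n})\leq\frac{\max\{r_1,\dots,r_n\}}{q}=\frac{d-q}{q},
\end{equation*}
with equality throughout when the family $J_1,\dots,J_n$ admits a $w$-matching, i.e. (by the definition given just before the corollary and Remark~\ref{semiwh}) when $f$ admits a $w$-matching. To finish, I need to compare $\LL_0(f)=\LL_0(J(f))$, the \L{}ojasiewicz exponent of the single ideal generated by all the partials, with the mixed \L{}ojasiewicz exponent $\LL_0(\langle\partial f/\partial x_1\rangle,\dots,\langle\partial f/\partial x_n\rangle)$ of the $n$-tuple. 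For a map germ $g=(g_1,\dots,g_n)$ with $g^{-1}(0)=\{0\}$ lying in $\mathcal S_0$ of the relevant monomial ideals — which is exactly the content of the genericity/semi-weighted homogeneity above — Theorems~\ref{base} and~\ref{base2} identify $\LL_0(g)$ with $\LL_0(I_1,\dots,I_n)$; for the upper bound $(\leq)$ one can alternatively note $J(f)\subseteq \A_{r_1}+\cdots+\A_{r_n}$ up to integral closure and use monotonicity, while $p_w(\nabla f)$ being a weighted homogeneous regular sequence forces the partials themselves to realize the generic multiplicity.

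The main obstacle I expect is the last comparison step: justifying cleanly that $\LL_0(f)$ — the \L{}ojasiewicz exponent of the \emph{single} ideal $J(f)$ — coincides with (or is bounded by) the mixed quantity $\LL_0$ of the $n$-tuple of principal monomial ideals $\A_{r_i}$, and in particular that the gradient map $(\partial f/\partial x_1,\dots,\partial f/\partial x_n)$ belongs to $\mathcal S_0(\A_{r_1},\dots,\A_{r_n})$ after passing to $p_w(f)$. This requires pinning down that each $\partial p_w(f)/\partial x_i$ has Newton polyhedron meeting the hyperplane $\langle k,w\rangle=r_i$ and that the tuple is sufficiently generic in the sense of \cite[Theorem 3.10]{Bivia2008}, using the isolated-singularity hypothesis on $p_w(f)$. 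Once that identification is in place, the inequality~(\ref{gradient}) and the equality case drop out of Theorem~\ref{main} immediately.
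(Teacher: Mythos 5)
Your proposal is correct and follows essentially the same route as the paper: pass to the monomial ideals $J_i$ generated by the monomials in $\Gamma_+(\partial f/\partial x_i)$, identify $\LL_0(f)$ with $\LL_0(J_1,\dots,J_n)$ via Theorem~\ref{base}, note $d_w(J_i)=d-w_i$, and apply Theorem~\ref{main}; the extra verifications you supply (that $\sigma(\A_{r_1},\dots,\A_{r_n})=\sigma(J_1,\dots,J_n)=\prod_i(d-w_i)/\prod_i w_i$, using the isolated singularity of $p_w(f)$ and Proposition~\ref{Bezoutlike}) are precisely the details the paper leaves implicit. The only slip is in your closing paragraph: the gradient map lies in $\mathcal S_0(J_1,\dots,J_n)$ rather than $\mathcal S_0(\A_{r_1},\dots,\A_{r_n})$, since $\Gamma_+(\partial f/\partial x_i)$ need not coincide with $\Gamma_+(\A_{d-w_i})$, which is why Theorem~\ref{base} must be applied to the $J_i$ exactly as you do in the body of your argument.
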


\begin{proof}
Let $J_i$ denote the ideal of $\O_n$ generated by all monomials $x^k$
such that $k\in\Gamma_+(\partial f/\partial x_i)$, $i=1,\dots, n$.
Theorem \ref{base} shows that $\LL_0( f)=\LL_0(J_1,\dots, J_n)$.  We
observe that $d_w(J_i)=d-w_i$, for all $i=1,\dots, n$.  Then the
result arises as a direct application of Theorem \ref{main}.
\end{proof}

It has been proven recently by P{\l}oski et al.  \cite{KrasinskiOleksikPloski2009}
that equality (\ref{gradient}) holds for all weighted homogeneous function
$f:(\C^3,0)\to (\C,0)$ such that $f$ has an isolated singularity at
the origin, under the hypothesis that $2w_{i}\leq d$ for all $i$.

Given a vector of weights $w=(w_1,\dots, w_n)$ and a degree $d$, then
it is not always possible to find a weighted homogeneous function
$f:(\C^n,0)\to (\C,0)$ of degree $d$ with respect to $w$ such that
$f$ admits a $w$-matching, as the following example shows.

\begin{ex}
Let $w=(1,2,3)$ and $d=16$.  Let $f$ be a weighted homogeneous
function of degree $d$ with respect to $w$.  Let $J_i$ denote the
ideal of $\O_3$ generated by all monomials $x^k$ such that
$k\in\Gamma_+(\partial f/\partial x_i)$, for all $i=1,2,3$.  As a
direct consequence of Definition \ref{defwmatching}, if $J_1,J_2,J_3$
admits a $w$-matching, then $J_3$ contains a pure monomial of $x_2$ or
a pure monomial of $x_3$, which is impossible since $d_w(J_3)=13$ and
neither $2$ nor $3$ are divisors of $13$.

However we observe that $\O(w;d)\neq \emptyset$, since the function
$f(x_1,x_2,x_3)=x_{1}^{16}+x_{2}^{8}+x_{1}x_{3}^{5}$ belongs to $\O(w;d)$.
\end{ex}

\begin{prop} \label{wMatchEntero}
Let $d$, $w_{1},\ldots,w_{n}$ be non negative integers such
that $w_{i}$ divides $d$ for all $i=1,\ldots,n$.  Let
$f:(\C^n,0)\to (\C,0)$ be a weighted homogeneous function of degree
$d$ with respect to the weights $w_1,\dots, w_n$.  Let us assume that
$f$ has an isolated singularity at the origin.  Then there exists
a change of coordinates $\x$ in $(\C^n,0)$ of the form
$x_{i}=y_{i}+h_{i}(y_{1},\ldots,y_{n})$, where $h_i$ is a polynomial
in $y_1,\dots,y_n$, $i=1,\ldots,n$, such that:
\begin{enumerate}
\item the function $f\circ \x$ is convenient;

\item if $h_i\neq 0$, then the polynomial $h_{i}$ is weighted
homogeneous of degree $w_i$ with respect to $w$ and therefore
$f\circ\x$ is weighted homogeneous of degree $d$ with respect to $w$.
\end{enumerate}
\end{prop}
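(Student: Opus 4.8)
The plan is to produce the coordinate change by a genericity argument, fed with one algebraic consequence of the hypothesis that $f$ has an isolated singularity at the origin. First I would record what must be achieved. Since $w_i\mid d$, the vector $(d/w_i)e_i$ lies in $\Z^n_+$, and for any $g\in\O_n$ that is weighted homogeneous of degree $d$ with respect to $w$ one checks directly that $\Gamma_+(g)$ meets the $x_i$-axis if and only if the coefficient of the monomial $x_i^{d/w_i}$ in $g$ is non-zero. Moreover, if each $h_i$ is either zero or weighted homogeneous of degree $w_i$, then $f\circ\x$ is again weighted homogeneous of degree $d$, since substituting an element of $w$-degree $w_i$ for $x_i$ in a polynomial of $w$-degree $d$ produces a polynomial of $w$-degree $d$. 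Thus the statement reduces to exhibiting polynomials $h_1,\dots,h_n$, each zero or weighted homogeneous of degree $w_i$, such that the map $y\mapsto(y_i+h_i(y))_i$ is a local isomorphism at $0$ and, for every $i$, the coefficient of $x_i^{d/w_i}$ in $f\circ\x$ is non-zero.

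Next I would set up a parameter space. For each $i$ let $V_i$ be the finite-dimensional space of polynomials that are weighted homogeneous of degree $w_i$ with respect to $w$, and put $\mathcal V=V_1\oplus\cdots\oplus V_n$, an affine space over $\C$. For $\mathbf h=(h_1,\dots,h_n)\in\mathcal V$ the polynomial $f\circ\x$ (with $x_i=y_i+h_i(y)$) has coefficients depending polynomially on the coordinates of $\mathbf h$, so for each $i$ the coefficient of $x_i^{d/w_i}$ in $f\circ\x$ defines a polynomial function $P_i\colon\mathcal V\to\C$. The linear part of $y\mapsto(y_i+h_i(y))_i$ is $I+C$, where $C_{ij}$ is the coefficient of $y_j$ in $h_i$ and vanishes unless $w_i=w_j$; its determinant $D$ is a polynomial function on $\mathcal V$ with $D(\mathbf 0)=1$. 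Since $\mathcal V$ is irreducible, it suffices to show that none of $D,P_1,\dots,P_n$ vanishes identically: the complement in $\mathcal V$ of $\{D=0\}\cup\{P_1=0\}\cup\cdots\cup\{P_n=0\}$ is then a non-empty Zariski-open set, and any $\mathbf h$ in it yields a change of coordinates with the required properties. For $D$ this is clear, so the real content is to prove $P_i\not\equiv 0$ for each fixed $i$.

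For this I would use the isolated singularity hypothesis: it means $J(f)$ is $m$-primary, so $x_i^N\in J(f)$ for some $N$; writing $x_i^N=\sum_j g_j\,\partial f/\partial x_j$ with $g_j\in\O_n$ and restricting to the $x_i$-axis (setting $x_\ell=0$ for $\ell\neq i$) shows that some $\partial f/\partial x_j$ has a non-zero pure power of $x_i$. As $\partial f/\partial x_j$ is weighted homogeneous of degree $d-w_j$, that pure power must be a non-zero multiple of $x_i^{(d-w_j)/w_i}$; in particular $w_i\mid d-w_j$. If $j=i$, this forces the coefficient of $x_i^{d/w_i}$ in $f$ itself to be non-zero, i.e. $P_i(\mathbf 0)\neq 0$, and we are done for this $i$. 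If $j\neq i$, then the only monomial of $f$ that can contribute a pure power of $x_i$ to $\partial f/\partial x_j$ is $x_i^{b}x_j$ with $b=(d-w_j)/w_i$, so its coefficient $c$ in $f$ is non-zero; moreover $w_i\mid w_j$, since $w_i\mid d-w_j$ and $w_i\mid d$ by hypothesis. Hence $x_i^{w_j/w_i}$ is a monomial of $w$-degree $w_j$, and I would restrict $P_i$ to the line $\{h_j=\lambda x_i^{w_j/w_i},\ h_\ell=0\ \text{for}\ \ell\neq j\}\subset\mathcal V$, along which the coordinate change is simply $x_j\mapsto x_j+\lambda x_i^{w_j/w_i}$. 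Expanding, the monomials of $f$ that can create a pure power of $x_i$ after this substitution are those of the form $x_i^a x_j^e$, and only $x_i^b x_j$ contributes to the term linear in $\lambda$; so $P_i$ restricted to this line is a polynomial in $\lambda$ whose $\lambda$-coefficient is $c\neq 0$, hence is not identically zero. Therefore $P_i\not\equiv 0$ on $\mathcal V$, as needed.

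Picking $\mathbf h$ in the dense open set above then gives a biholomorphism germ $x_i=y_i+h_i(y)$ with each $h_i$ zero or weighted homogeneous of degree $w_i$, with $f\circ\x$ weighted homogeneous of degree $d$, and with $\Gamma_+(f\circ\x)$ meeting every coordinate axis, i.e. $f\circ\x$ convenient. The step I expect to be the main obstacle is the extraction, from the isolated singularity hypothesis, of a monomial of $f$ of the shape $x_i^{d/w_i}$ or $x_i^b x_j$: namely the Jacobian-ideal computation together with the bookkeeping that the linear $\lambda$-coefficient above does not vanish; the remainder is a routine genericity argument. (One could instead proceed constructively, removing the obstructions one variable at a time, but then one must check that adjusting the $i$-th variable does not destroy an axis already made convenient, which the simultaneous generic choice circumvents.)
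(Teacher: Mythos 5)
Your proposal is correct and follows essentially the same route as the paper: extract from the isolated-singularity hypothesis, for each $i$, a monomial $x_i^{m_i}x_{k_i}$ in $\supp(f)$ (equivalently a pure power of $x_i$ in some $\partial f/\partial x_{k_i}$), then shear $x_{k_i}$ by a generic multiple of $y_i^{w_{k_i}/w_i}$ and conclude by genericity. Your version merely makes the paper's genericity claim explicit, by parametrizing over all weighted homogeneous $h_i$ of degree $w_i$ and checking that each coefficient function $P_i$ and the Jacobian determinant $D$ are not identically zero.
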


\begin{proof}
Since $f$ has an isolated singularity at the origin, for any
$i=1,\dots, n$ we can fix an index $k_{i}\in\{1,\dots, n\}$ such
that $x_{i}^{m_{i}}$ appears in the support of $\frac{\partial
f}{\partial x_{k_i}}$, where $m_{i}=\frac{d-w_{k_{i}}}{w_{i}}$,
which is to say that the monomial $x_{k_{i}}x_{i}^{m_{i}}$ appears
in the support of $f$.  Then $w_{i}$ divides $d-w_{k_{i}}$
and consequently $w_{i}$ divides $w_{k_{i}}$, since $w_i$
divides $d$ by assumption.

For all $j=1,\dots, n$, we set $L_j=\{i: k_i=j, i\neq j\}$. Let us define
\begin{equation}\label{aji}
    h_{j}=\begin{cases}
    \sum_{i\in L_j} a_{j,i} y_{i}^{w_j/w_i} &\textnormal{if $L_j\neq \emptyset$}\\
    0  &\textnormal{otherwise},
    \end{cases}
\end{equation}
where we suppose that ${\{a_{j,i}\}}_{j,i}$ is a generic choice of
coefficients in $\C$.  It is straightforward to see that, given an
index $j\in\{1,\dots, n\}$ such that $h_j\neq 0$, the polynomial
$h_{j}$ is weighted homogeneous of degree $w_{j}\).

Let us consider the map $\x:(\C^n,0)\to (\C^n,0)$, $\x(y_1,\dots,
y_n)=(x_1,\dots, x_n)$, given by
$$
 x_{j}=y_{j}+h_{j}(y), \hspace{0.5cm}\textnormal{for all $j=1,\ldots,n$}.
$$

We conclude that $\x$ is a local biholomorphism, the function $f \circ
\x$ is weighted homogeneous with respect to $w$ of degree $d$ and,
by the genericity of the coefficients $a_{j,i}$ in (\ref{aji}),
the pure monomial $y_i^{d/w_i}$ appears in the support of $f \circ \x\),
for all $i=1,\ldots,n$.  Hence the function $f \circ \x$ is
convenient.
\end{proof}

\begin{ex}
Set $w=(1,2,3,4,6)$ and $d=12$.  The polynomial
$f=x_{1}^{12}+x_{2}^{4}x_{4}+x_{4}^{3}+x_{3}^{2}x_{5}+x_{5}^{2}$ is
weighted homogeneous of degree 12.  Let $J_i$ denote the ideal of
$\O_5$ generated by all monomials $x^k$ such that
$k\in\Gamma_+(\partial f/\partial x_i)$, $i=1,\dots, 5$.  A
straightforward computation shows that
$$
J_1=\langle x_1^{11}\rangle,\hspace{0.3cm} J_2=\langle x_2^3x_4\rangle,
\hspace{0.3cm} J_3=\langle x_3x_5 \rangle, \hspace{0.3cm}
J_4=\langle x_2^4, x_4^2 \rangle,\hspace{0.3cm} J_5=\langle x_3^2, x_5 \rangle.
$$
Since the ideals $J_2$ and $J_{3}$ do not contain any pure monomial, the family
of ideals $\{J_i: i=1,\dots, 5\}$ does not admit a $w$-matching.

Following the proof of Proposition \ref{wMatchEntero}, we consider the
coordinate change $\x:(\C^5,0)\to (\C^5,0)$, given by: $x_{1}=y_{1}$,
$x_{2}=y_{2}$, $x_{3}=y_{3}$, $x_{4}=y_{4}+y_{2}^2$,
$x_{5}=y_{5}+y_{3}^2$.  Let $g=f\circ\x$ and let $J'_i$ denote the
ideal of $\O_5$ generated by all monomials $y^k$ such that
$k\in\Gamma_+(\partial g/\partial y_i)$, $i=1,\dots, 5$.  Then, as
shown in that proof, the function $g$ is convenient and therefore the
family of ideals $\{J_i': i=1,\dots, 5\}$ admits a $w$-matching.
\end{ex}

\begin{cor} \label{CorEntero}
Let $d$, $w_{1},\ldots,w_{n}$ be non negative integers such
that $w_{i}$ divides $d$ for all $i=1,\ldots,n$.
Let $f:(\C^n,0)\to (\C,0)$ be a semi-weighted homogeneous function of
degree $d$ with respect to the weights $w_1,\dots, w_n$. Then
\begin{equation*}
\LL_0(f)=\frac{d-\min\{w_1,\dots,w_n\}}{\min\{w_1,\dots,w_n\}}.
\end{equation*}
\end{cor}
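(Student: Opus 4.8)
The plan is to reduce the general semi-weighted homogeneous case to the weighted homogeneous case, and then within the weighted homogeneous case to the situation where a $w$-matching is available, so that Corollary \ref{CorMain} applies and gives the desired equality. First I would recall that the inequality $\LL_0(f)\leq \frac{d-\min\{w_1,\dots,w_n\}}{\min\{w_1,\dots,w_n\}}$ always holds for semi-weighted homogeneous $f$ by Corollary \ref{CorMain}, so only the reverse inequality needs proof. Next I would observe that, since $f$ is semi-weighted homogeneous of degree $d$, its principal part $p_w(f)$ is a weighted homogeneous function of degree $d$ with an isolated singularity at the origin, and that $f$ and $p_w(f)$ have the same {\L}ojasiewicz exponent of the gradient; indeed, by Theorem \ref{base} the number $\LL_0(f)=\LL_0(J_1,\dots, J_n)$ depends only on the Newton polyhedra $\Gamma_+(\partial f/\partial x_i)$, and since $f$ and $p_w(f)$ share the same principal part these polyhedra agree after the usual genericity considerations. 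Hence it suffices to treat the case where $f$ itself is weighted homogeneous of degree $d$.

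Now I would invoke Proposition \ref{wMatchEntero}: since $w_i$ divides $d$ for all $i$ and $f$ is weighted homogeneous of degree $d$ with an isolated singularity, there is a change of coordinates $\x$ of the form $x_i=y_i+h_i(y)$ with each $h_i$ weighted homogeneous of degree $w_i$, such that $g=f\circ\x$ is convenient and still weighted homogeneous of degree $d$ with respect to $w$. Since $\x$ is a local biholomorphism, $\LL_0(g)=\LL_0(f)$: a coordinate change preserves the {\L}ojasiewicz exponent of the gradient ideal because the Jacobian ideals $J(f)$ and $J(g)$ are carried to one another by the chain rule, up to multiplication by the invertible Jacobian matrix of $\x$, hence have the same integral closure after pullback, hence the same {\L}ojasiewicz exponent. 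Then $g$ is convenient and weighted homogeneous, so by the remark following the definition of $w$-matching (``if a function $f\in\O_n$ is convenient and quasi-homogeneous, then $f$ admits a $w$-matching''), $g$ admits a $w$-matching, and Corollary \ref{CorMain} applied to $g$ yields
$$
\LL_0(f)=\LL_0(g)=\frac{d-\min\{w_1,\dots,w_n\}}{\min\{w_1,\dots,w_n\}},
$$
as required.

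The step I expect to be the main obstacle is the careful justification that $\LL_0$ is invariant under the two reductions: first, that passing from a semi-weighted homogeneous $f$ to its weighted homogeneous principal part $p_w(f)$ does not change $\LL_0(f)$ — this uses that the relevant invariant is encoded in the Newton polyhedra of the partial derivatives via Theorem \ref{base}, together with the fact that $f$ and $p_w(f)$ have the same principal part and the same Milnor number, so the finiteness hypotheses ($\sigma(\A_{r_1},\dots,\A_{r_n})<\infty$ and the equality $\sigma(J_1,\dots, J_n)=\sigma(\A_{r_1},\dots,\A_{r_n})$ needed to invoke Theorem \ref{main}) transfer; and second, the invariance of $\LL_0$ under the polynomial coordinate change $\x$ of Proposition \ref{wMatchEntero}, which is standard but should be stated explicitly. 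Once these invariances are in place, the rest is a direct chain of citations to Corollary \ref{CorMain}, Proposition \ref{wMatchEntero}, and the convenient-plus-quasi-homogeneous-implies-$w$-matching remark.
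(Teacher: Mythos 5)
Your overall route is the paper's: apply Proposition \ref{wMatchEntero} to produce a coordinate change $\x$ after which the function is convenient, use that the {\L}ojasiewicz exponent is a bianalytic invariant, and conclude via Corollary \ref{CorMain} and the fact that convenient weighted homogeneous functions admit a $w$-matching. However, the extra reduction you insert at the start --- replacing $f$ by its principal part $p_w(f)$ on the grounds that ``$\LL_0$ depends only on the Newton polyhedra $\Gamma_+(\partial f/\partial x_i)$ \dots and since $f$ and $p_w(f)$ share the same principal part these polyhedra agree after the usual genericity considerations'' --- contains a genuine error: those polyhedra need \emph{not} agree. The supports of $f$ and of $f-p_w(f)$ are disjoint (the latter consists of terms of strictly higher $w$-degree), so one only gets the inclusion $\Gamma_+(\partial p_w(f)/\partial x_i)\subseteq \Gamma_+(\partial f/\partial x_i)$, and it can be strict: for $w=(2,1)$, $d=4$, $f=x^2+y^4+xy^3$, the polyhedron of $\partial f/\partial y=4y^3+3xy^2$ strictly contains that of $\partial (p_w(f))/\partial y=4y^3$. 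There is also no genericity available here --- the Taylor coefficients of $f$ are fixed data, not parameters to be chosen generically. As written, the claim $\LL_0(f)=\LL_0(p_w(f))$ is therefore unjustified.

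The gap is repairable in two ways. The paper's way avoids the reduction entirely: it applies $\x$ (built from $p_w(f)$) directly to $f$, observes that $p_w(f\circ\x)=p_w(f)\circ\x$ is convenient, so by Remark \ref{semiwh} the \emph{semi}-weighted homogeneous function $f\circ\x$ already admits a $w$-matching, and then Corollary \ref{CorMain} applies to $f\circ\x$ itself. Alternatively, if you want to keep your reduction, note that you only need the inequality $\LL_0(p_w(f))\leq \LL_0(f)$, and this does follow from the inclusions $J_i^{p_w(f)}\subseteq J_i^{f}$ via Proposition \ref{uppers}, once you check the hypothesis $\sigma(J_1^{p_w(f)},\dots,J_n^{p_w(f)})=\sigma(J_1^{f},\dots,J_n^{f})$ (both equal $\prod_i(d-w_i)/\prod_i w_i$ by Proposition \ref{Bezoutlike}, since $f$ and $p_w(f)$ have the same Milnor number). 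Either repair makes your argument complete; the rest of your proposal (the use of Proposition \ref{wMatchEntero}, the bianalytic invariance of $\LL_0$, and the final appeal to Corollary \ref{CorMain}) matches the paper.
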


\begin{proof}
Since $f$ is semi-weighted homogeneous, the principal part $p_w(f)$
has an isolated singularity at the origin.  Let $\x:(\C^n,0)\to
(\C^n,0)$ denote the analytic coordinate change obtained in
Proposition \ref{wMatchEntero} applied to $p_w(f)$.  The function
$p_w(f)\circ \x$ is weighted homogeneous of degree $d$ with respect to
$w$.  Therefore
$$
p_w(f)\circ \x=p_w(f\circ \x),
$$
which implies that $f\circ \x$ is a semi-weighted homogeneous
function.  Then, by Proposition \ref{wMatchEntero} and Remark
\ref{semiwh}, the function $f\circ \x$ admits a $w$-matching.  Thus we
obtain, by Corollary \ref{CorMain}, that
$$
\LL_0(f\circ \x)=\frac{d-\min\{w_1,\dots, w_n\}}{\min\{w_1,\dots w_n\}}.
$$
Then the result follows, since the local {\L}ojasiewicz exponent is a
bianalytic invariant.
\end{proof}

We remark that in Corollary \ref{CorEntero} we do not assume
$2w_{i}\leq d$ as in \cite{KrasinskiOleksikPloski2009}.
This assumption can not be eliminated from the main result of
\cite{KrasinskiOleksikPloski2009}, as the following example shows.

\begin{ex}
Let us consider the polynomial $f$ of $\O_3$ given by
$f=x_{1}x_{3}+x_{2}^2+x_{1}^{2}x_{2}$.  We observe that $f$ is
weighted homogeneous of degree $4$ with respect to the vector of
weights $w=(1,2,3)$.  The Jacobian ideal is $\langle
x_{1},x_{2},x_{3}\rangle$ so that $\LL_{0}(f)=1\neq 3$.  We
remark that it is easy to check that $f$ does not admit a
$w$-matching.
\end{ex}


\end{document}